\DeclareRobustCommand{\SkipTocEntry}[3]{}
\newcommand\@dotsep{4.5}
\def\@tocline#1#2#3#4#5#6#7{\relax
  \ifnum #1>\c@tocdepth 
  \else
    \par \addpenalty\@secpenalty\addvspace{#2}%
    \begingroup \hyphenpenalty\@M
    \@ifempty{#4}{%
      \@tempdima\csname r@tocindent\number#1\endcsname\relax
    }{%
      \@tempdima#4\relax
    }%
    \parindent\z@ \leftskip#3\relax \advance\leftskip\@tempdima\relax
    \rightskip\@pnumwidth plus1em \parfillskip-\@pnumwidth
    #5\leavevmode #6\relax
    \leaders\hbox{$\m@th
      \mkern \@dotsep mu\hbox{.}\mkern \@dotsep mu$}\hfill
    \hbox to\@pnumwidth{\@tocpagenum{#7}}\par
    \nobreak
    \endgroup
  \fi}
\DeclareFontFamily{OT1}{rsfs}{}
\DeclareFontShape{OT1}{rsfs}{n}{it}{<-> rsfs10}{}
\DeclareMathAlphabet{\curly}{OT1}{rsfs}{n}{it}
\newcommand{\virt}{\mbox{\tiny virt}}
\newcommand{\zz}{{\mathbb Z}}
\newcommand{\kk}{{\mathbb K}}
\newcommand{\aaa}{{\mathbb A}}
\renewcommand{\ll}{{\mathbb L}}
\newcommand{\qq}{{\mathbb Q}}
\newcommand{\cc}{{\mathbb C}}
\newcommand{\Gm}{{{\mathbb G}_{\mbox{\tiny\rm m}}}}
\newcommand{\eE}{{\mathcal E}}
\newcommand{\sS}{{\mathcal S}}
\newcommand{\tS}{{\widetilde S}}
\newcommand{\tR}{{\widetilde R}}
\newcommand{\tA}{{\widetilde A}}
\newcommand{\tB}{{\widetilde B}}
\newcommand{\oO}{{\mathcal O}}
\newcommand{\wW}{{\mathcal W}}
\newcommand{\mM}{{\mathcal M}}
\newcommand{\mMF}{\mathscr{MF}}
\newcommand{\s}{{\widetilde s}}
\newcommand{\cHom}{\mathscr{H}om}
\newcommand{\rE}{\mathscr{E}}
\newcommand{\E}{\mathop{\sf E}\nolimits}
\DeclareMathOperator{\Var}{Var}
\DeclareMathOperator{\val}{val}
\DeclareMathOperator{\id}{id}
\DeclareMathOperator{\Crit}{Crit}
\DeclareMathOperator{\Hilb}{Hilb}
\DeclareMathOperator{\vd}{vd}
\DeclareMathOperator{\ind}{ind}
\DeclareMathOperator{\Sym}{Sym}
\DeclareMathOperator{\Hom}{Hom}
\DeclareMathOperator{\Spec}{Spec}
\DeclareMathOperator{\Ob}{Ob}
\DeclareMathOperator{\Tot}{Tot}
\DeclareMathOperator{\rank}{rank}
\DeclareMathOperator{\ob}{ob}
\DeclareMathOperator{\red}{red}
\DeclareMathOperator{\Exp}{Exp}
\DeclareMathOperator{\SU}{SU}
\DeclareMathOperator{\VW}{VW}
\DeclareMathOperator{\vw}{vw}
\renewcommand\O{\mathcal O}
\newcommand\LL{\mathbb L}
\newcommand\C{\mathbb C}
\newcommand\udot{^{\bullet}}
\renewcommand\;{\hspace{1pt}}
\newcommand{\Rt}[1]{\stackrel{#1\,}{\longrightarrow}}
\newcommand{\RT}[2]{\xymatrix@C=#1pt{\ar[r]^{#2}&}}
\newcommand\To{\longrightarrow}
\newcommand\ito{\ar@{^{ (}->}[r]}
\newfont{\bigtimesfont}{cmsy10 scaled \magstep5}
\newcommand{\bigtimes}{\mathop{\lower0.9ex\hbox{\bigtimesfont\symbol2}}}
\newcommand\rk{\operatorname{rank}}
\newcommand\tr{\operatorname{tr}}
\newcommand\Pic{\operatorname{Pic}}
\newcommand\beq[1]{\begin{equation}\label{#1}}
\newcommand\eeq{\end{equation}}
\newcommand\beqa{\begin{eqnarray*}}
\newcommand\eeqa{\end{eqnarray*}}
\makeatletter \@addtoreset{equation}{section} \makeatother
\newtheorem{defn}[equation]{Definition}
\newtheorem{thm}[equation]{Theorem}
\newtheorem{cor}[equation]{Corollary}
\newtheorem{prop}[equation]{Proposition}
\newenvironment{rmk}{\noindent\textbf{Remark}.}{}
\theoremstyle{definition}
\DeclareRobustCommand{\SkipTocEntry}[4]{}
\begin{document}

\title[Virtual signed Euler characteristics and VW invariants]{Motivic virtual signed Euler characteristics and applications to Vafa-Witten invariants}
\author{Yunfeng Jiang}

\begin{abstract}
%
%
For any scheme $M$ with a perfect obstruction theory, Jiang and Thomas associate a scheme $N$ with symmetric perfect obstruction theory. The scheme $N$ is a cone over $M$ given by the dual of the obstruction sheaf of $M$, and contains $M$ as its zero section. Locally $N$ is the critical locus of a regular function.
 In this  note we prove that $N$ is a $d$-critical scheme in the sense of Joyce.  By assuming an orientation on $N$ there exists a global motive for $N$ locally given by the motive of vanishing cycles of the local regular function. We prove a motivic localization formula under the good and circle compact $\C^*$-action for $N$. When taking Euler characteristic  the weighted Euler characteristic of $N$ weighted by the Behrend function  is the signed Euler characteristic of $M$ by motivic method. 
  
 As applications we calculate the motivic generating series of the motivic Vafa-Witten invariants for K3 surfaces. This motivic series gives the result of the $\chi_y$-genus for Vafa-Witten invariants of K3 surfaces, which is the same (at instanton branch) as the K-theoretical Vafa-Witten invariants of Thomas.
\end{abstract}

\maketitle \vspace{-5mm}
\tableofcontents

\section{Introduction}

Let $M$ be a scheme (or DM stack) with a perfect obstruction theory $E^\bullet$ in the sense of \cite{LT}, \cite{BF}.  Denote by 
$\vd=\rank(E^\bullet)$, which is the virtual dimension.  In \cite{JT}, Jiang and Thomas associate with $M$ a scheme $N$, which admits a symmetric obstruction theory in the sense of Behrend in \cite{Behrend}.  Roughly this scheme $N=\Tot(\ob_{M}^*)$ is the total space of the dual of the obstruction sheaf $\ob_{M}$ associated with the obstruction theory $E^\bullet$.  For the  mathematical  definition,  
$$N:=\Spec(\Sym^{\bullet}\ob_{M}),$$
which is the abelian cone of the obstruction sheaf $\ob_M$.  Let $\pi: N\to M$ be the projection. 
There is a $\Gm$-action on $N$ scaling the fibers and the fixed locus is $M$. 
In \cite{JT}, the following four invariants  are studied: 
\begin{enumerate}
\item The Ciocan-Fontanine-Kapranov/Fantechi-G\"ottsche signed virtual Euler characteristic of $M$ defined using its own obstruction theory;
\item Graber-Pandharipande's virtual Atiyah-Bott localization of the virtual cycle of $N$ to $M$;
\item Behrend's weighted Euler characteristic of $N$ by the Behrend function $\nu_N$;
\item Kiem-Li's cosection localization of the virtual cycle of $N$ to $M$. 
\end{enumerate}
\cite{JT} proves that $(1)=(2)$, and $(3)=(4)$.  The invariant $(1)=(2)$ is deformation invariant, while $(3)=(4)$ is not. 
Moreover the invariant  $(3)=(4)$ is the signed Euler characteristic of $M$. 
A similar related situation was studied in \cite{Jiang4}. 
It turns out that both of these invariants are useful, which are related to the Vafa-Witten invariants for projective surfaces or local surfaces, see \cite{TT}, \cite{TT2}. 

In the setting of derived algebraic geometry as in \cite{PTVV}, $N$ is $(-1)$-shifted symplectic because it is the $(-1)$-shifted cotangent bundle  $T^*(M, E^\bullet)[-1]$ of $(M, E^\bullet)$, if $(M, E^\bullet)$ comes from a quasi-smooth derived scheme.  In \cite{Joyce}, Joyce proves that the underlying scheme $N$ of a $(-1)$-shifted symplectic derived scheme is a $d$-critical scheme, which is defined in \cite{Joyce}, but the converse is not true.  Since a scheme $M$ with a perfect obstruction theory $E^\bullet$ is not always induced from a quasi-smooth derived scheme, in the paper \cite{JT}, the authors start from a derived scheme $(M, E^\bullet)$, and construct $N$ by taking derived cotangent bundle of $(M, E^\bullet)$.  

We prove in this paper that $N$ is a $d$-critical scheme in the sense of \cite{Joyce}.  Of course $N$ does not always come from a $(-1)$-shifted symplectic derived scheme.  
Also $N$ does not always have a symmetric obstruction theory in \cite{Behrend}.  
Let $K_{N}$ be the canonical line bundle for the $d$-critical scheme $N$ defined in
 \cite[Definition 2.31]{Joyce}. We assume that the $d$-critical scheme $N$ has an orientation, i.e., a square root $K_{N}^{\frac{1}{2}}$ exists.  So from \cite{BJM}, there is a unique global motive $\mMF_{N}^\phi\in \overline{\mM}_{N}^{\hat{\mu}}$, where $\overline{\mM}_{N}^{\hat{\mu}}=K^{\hat{\mu}}_0(\Var_{N})[\mathbb{L}^{-1}]$ and $K^{\hat{\mu}}_0(\Var_{N})$ is the 
equivariant Grothendieck ring of varieties. 
On each $d$-critical chart  $(\widetilde{R},  \widetilde{A}, \s, \widetilde{i})$ of the $d$-critical scheme $(N,s)$, the motive 
$$\mMF_{N}^{\phi}|_{\widetilde{R}}=i^\star(\mMF_{\widetilde{A},\s}^{\phi})\odot \Upsilon(Q_{\widetilde{R},  \widetilde{A}, \s, \widetilde{i}})\in \overline{\mM}_{\widetilde{R}}^{\hat{\mu}},$$
where 
$\mMF_{\widetilde{A},\s}^{\phi}=\ll^{-\dim(\widetilde{A})/2}\odot [[\widetilde{A}_0,\hat{\iota}]-\mMF_{\widetilde{A},\s}]|_{\widetilde{R}}$ is the motivic vanishing cycle; 
$\mMF_{\widetilde{A},\s}$ is the motivic nearby cycle defined in \cite[Definition 7.9]{Jiang3}; and 
$\Upsilon(Q_{\widetilde{R},  \widetilde{A}, \s, \widetilde{i}})=\ll^{\frac{1}{2}}\odot ([\widetilde{R},\hat{\iota}]-[Q,\hat{\rho}])\in \overline{\mM}_{\widetilde{R}}^{\hat{\mu}}$ is the motive of the principal $\zz_2$-bundle $Q_{\widetilde{R},  \widetilde{A}, \s, \widetilde{i}}$ as in \cite[\S 2.5]{BJM}.
This ring admits a new product $\odot$, which is defined by \cite{BJM}.  More details can be found in \cite[\S 7.1]{Jiang3}. 

In this paper we prove a motivic localization formula for the global motive $\mMF_{N}^\phi$ under the $\Gm$-action on $N$, removing the preservation of the orientation $K_{N}^{\frac{1}{2}}$ of the $\Gm$-action as in \cite[Theorem 7.17]{Jiang3}.   

The cone $N$ admits a  good, circle-compact action of $\Gm$ by scaling the fibers of $N$. The fixed locus is $M$.  Our main result is: 

\begin{thm}\label{intro_thm_general_motivic_localization}(Theorem \ref{thm_general_motivic_localization})
Let $(N,s)$ be the oriented $d$-critical scheme in Proposition \ref{prop_d_critical_scheme_N} and $\mu$ is the good, circle-compact action of  $\Gm$ on $N$. Let  
$\mMF_{N}^{\phi}\in\overline{\mM}_{N}^{\hat{\mu}}$ be the global motive of $N$.   Then we have the following localization formula. 
$$\int_{N}\mMF_{N}^{\phi}=\ll^{-\frac{\rank(E^{\bullet})}{2}}\odot \int_{M}[M].$$ 
\end{thm}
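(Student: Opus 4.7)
The plan is to apply the motivic localization formula of \cite[Theorem 7.17]{Jiang3} for the good, circle-compact $\Gm$-action on $N$, and to extend it to the present setting where the $\Gm$-action need not preserve the orientation $K_{N}^{\frac{1}{2}}$. Since the fixed locus $N^{\Gm}=M$ is the zero section and $\Gm$ acts on the fibres of $\pi\colon N\to M$ with uniform weight $1$, the motivic integral should localize to $M$ and the contribution from the normal directions can be computed explicitly in local charts.

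First, I would construct $\Gm$-equivariant $d$-critical charts for $N$ refining those of Proposition \ref{prop_d_critical_scheme_N}. Locally on an affine open $U\subset M$, the perfect obstruction theory presents $M\cap U$ as the zero locus $\{s_1=\cdots=s_r=0\}\subset V$ of a section of a rank-$r$ vector bundle on a smooth scheme $V$, with $\dim V=\rank(E^0)$ and $r=\rank(E^{-1})$. Then $\pi^{-1}(U)\subset N$ is the critical locus of the regular function
\[
\s=\sum_{j=1}^{r} y_j s_j
\]
on $\widetilde{A}=V\times\aaa^r$, where $y_1,\ldots,y_r$ are coordinates on $\aaa^r$. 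The $\Gm$-action scales the $y_j$ with weight $1$ and fixes the $V$-coordinates, so $\s$ has weight $1$ under $\Gm$ and $(\widetilde{R},\widetilde{A},\s,\widetilde{i})$ is a $\Gm$-equivariant $d$-critical chart.

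Second, I would compute the restriction $\mMF_{\widetilde{A},\s}^{\phi}|_{M\cap\widetilde{R}}$. Because $\s$ is linear in the fibre coordinates $y_j$, at a smooth point of $M$ one can change coordinates on $V$ so that $s_i=u_i$, and then $\s$ becomes the non-degenerate hyperbolic quadratic form $\sum y_i u_i$ of rank $2r$ transversal to $M$. Using the product/Thom--Sebastiani formula for motivic vanishing cycles and the standard motivic Milnor fibre of the hyperbolic form, the $\hat{\mu}$-action is trivial and
\[
\mMF_{\widetilde{A},\s}^{\phi}\big|_{M\cap\widetilde{R}}=\ll^{(\dim V-r)/2}\odot [M\cap\widetilde{R}]=\ll^{\vd/2}\odot [M\cap\widetilde{R}]
\]
in $\overline{\mM}^{\hat{\mu}}_{M\cap\widetilde{R}}$. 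Then I would globalize by showing the principal $\zz_2$-bundle $Q_{\widetilde{R},\widetilde{A},\s,\widetilde{i}}$ measuring the discrepancy between local and global orientations trivializes after restriction to $M$, because the canonical orientation induced on $N$ by $E^\bullet$ itself (as in \cite{Dave, NO}) restricts to a $\Gm$-equivariant square root over the fixed locus. Hence $\Upsilon(Q)|_M$ is trivial and the local contributions glue to give $\int_{N}\mMF_{N}^{\phi}=\ll^{\vd/2}\odot\int_{M}[M]$.

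The main obstacle is removing the hypothesis of $\Gm$-equivariance of the orientation from \cite[Theorem 7.17]{Jiang3}. My plan is to exploit the fact that, although the orientation need not be globally $\Gm$-equivariant on $N$, its restriction to the fixed locus $M$ is canonically $\Gm$-equivariant, coming from the obstruction theory $E^\bullet$ itself. Combined with circle-compactness, this lets the $\zz_2$-torsor measuring the non-equivariance push forward trivially, so the same localization argument as in \cite{Jiang3} applies and, coupled with the local model calculation above, produces the asserted identity.
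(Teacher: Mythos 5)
Your plan has a genuine gap at its technical heart, namely the local computation of the motivic vanishing cycle on the fixed locus.

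You reduce to the claim that, after a coordinate change, $\s=\sum_j y_j s_j$ becomes the nondegenerate hyperbolic quadratic form $\sum_i y_i u_i$ of rank $2r$, and then you apply Thom--Sebastiani together with the standard motivic Milnor fibre of a split quadratic form. But this coordinate change $s_i\mapsto u_i$ is only available at points where $ds$ has full rank $r$, i.e.\ at smooth points of $M$. The whole point of the construction in \cite{JT} is that $M$ is an arbitrary projective scheme with a perfect obstruction theory, which can be highly singular (and indeed even non-reduced). At a singular point $p$, the Hessian of $\s$ at $(p,0)\in N$ is $\left(\begin{smallmatrix}0 & ds(p)^{T}\\ ds(p)&0\end{smallmatrix}\right)$, which has rank $2\cdot\mathrm{rank}\,ds(p)<2r$, so $\s$ is a degenerate critical point and the hyperbolic-form model simply does not apply. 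A stratification of $M$ by smoothness of $ds$ also does not salvage the argument, because the rank of the quadratic part jumps across strata and the ``normal direction'' contribution is no longer a single power of $\LL$ on each stratum. Thus your local model computation fails precisely on the locus where the theorem has content.

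The paper's proof avoids this entirely by invoking the theorem of Nicaise--Payne \cite[Theorem~4.1.1]{NS} (proving the Davison--Meinhardt conjecture): for a good circle-compact $\Gm$-action on $\widetilde A$ under which $\s$ has positive weight, the motivic nearby cycle is computed globally as $\mMF_{\widetilde A,\s}=[\s^{-1}(1)]$. One then evaluates $[\s^{-1}(0)]-[\s^{-1}(1)]$ directly over the critical locus: the contributions over $\widetilde R\setminus R$ cancel since both fibres are affine there, and over $R=M$ one gets the full fibre $\aaa^{\rank E^{*}}$. This computation is insensitive to the singularities of $M$ and is the actual content that replaces the $\Gm$-equivariance hypothesis of \cite[Theorem~7.17]{Jiang3}. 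You never invoke Nicaise--Payne, and your opening claim that one can simply ``apply \cite[Theorem~7.17]{Jiang3}'' runs counter to the paper's own framing, which is that that theorem's equivariant-orientation hypothesis fails here and must be circumvented. Your observation that the $\zz_2$-torsor $\Upsilon(Q)$ trivialises on suitable charts is along the right lines (the paper gets this from the CY orientation of Proposition~\ref{prop_CY_orientation}), but it cannot rescue the local vanishing-cycle computation.
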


Our method to prove Theorem \ref{intro_thm_general_motivic_localization} follows from the argument of the motivic nearby cycle under the good, circle-compact action of $\Gm$ in \cite{NS}.  In our situation Nicaise and Payne \cite{NS} prove the conjecture of 
Davison and Meinhardt on the motivic nearby fiber in \cite{DM17}, i.e., the motivic nearby cycle $\mMF_{\widetilde{A},\s}=[\s^{-1}(1)]$. 
We use this motivic cycle to prove the localization formula.   This is parallel to the result in Theorem A.1 of \cite{Dave}, where the author proves that in this situation
the pushforward to $M$ of the vanishing cycle sheaf $\phi_{\s}$ of the local regular function $\s$ on a $d$-critical chart
is the shifted constant sheaf on $M$.  Our contribution here is that we prove this statement for the motivic vanishing cycle using the result by Nicaise and Payne in \cite{NS} who use the techniques of semi-algebraic subsets.  Note that we do not need to use the techniques of semi-algebraic sets in \cite{NS}, only a result there. 
As an application of Theorem  \ref{intro_thm_general_motivic_localization}, we show that 
$$\chi(N, \nu_N)=(-1)^{\vd}\chi(M)$$
by the motivic method.  This equality was proved by a direct topological calculation in \cite{JT}.   

\medskip
\noindent\textbf{Motivic Vafa-Witten theory.} 
Let $S$ be a smooth projective surface. Motived by S-duality and Vafa-Witten's equation in \cite{VW},  Tanaka-Thomas \cite{TT}, \cite{TT2} have developed a theory counting stable and semistable Higgs pairs $(E,\phi)$ where $E$ is a torsion free rank $r$ coherent sheaf on $S$, and $\phi: E\to E\otimes K_S$ is a $\oO_S$-linear morphism called the Higgs field.  Let $N^{\perp}_L:=N_L^{\perp}(S,c)$ represent the moduli space of Gieseker stable Higgs pairs with Chern class $c=(r, c_1, c_2)\in H^*(S,\zz)$, fixed determinant $\det(E)=L$ and trace free $\phi$.  In \cite{TT}, the moduli space $N^{\perp}_L$ is proved to be isomorphic to the moduli space of stable two dimensional torsion sheaves on $X:=\Tot(K_S)$ supported on $S$.  Since $X$ is a Calabi-Yau threefold (not compact), the moduli space $N^{\perp}_L$ admits a symmetric obstruction theory in \cite{Behrend}.  Therefore there exists a virtual fundamental class $[N^{\perp}_L]^{\virt}\in H_0(N^{\perp}_L)$.  The space $N^{\perp}_L$ is not compact, but admits a $\cc^*$-action scaling the Higgs field and the fixed locus is compact.  The Vafa-Witten invariants are defined by virtual localization 
$$\VW_c(S)=\int_{[(N^{\perp}_L)^{\cc^*}]^{\virt}}\frac{1}{e(N^{\virt})}$$
where $e(N^{\virt})$ is the Euler class of the virtual normal bundle.  These invariants are called the $\SU(r)$-Vafa-Witten invariants. 
On the other hand the following invariants 
$$\vw_c(S)=\chi(N^{\perp}_L, \nu_{N})$$
are defined as Behrend's weighted Euler characteristic of $N^{\perp}_L$ weighted by the Behrend function 
$\nu_N: N^{\perp}_L\to \zz$.  These two invariants $\VW(S)$ and $\vw(S)$ are not the same for general surfaces, especially for general type surfaces, but for surfaces with $K_S<0$ and K3 surfaces, $\VW_c(S)=\vw_c(S)$, see \cite{TT}, \cite{MThomas}. 

From a general theory in \cite{JU}, the moduli space $N^{\perp}_L$ admits an orientation $K_{N^{\perp}_L}^{\frac{1}{2}}$. 
Let $\mMF_{N^{\perp}_L}^{\phi}$ be the global motive on $N^{\perp}_L$ obtained by gluing the local motive of vanishing cycles of the local function.  The motivic Vafa-Witten invariants are defined as:
\begin{defn}\label{defn_motivic_VW}
$$\vw^M_{c}(S)=\int_{N^{\perp}_L}[\mMF_{N^{\perp}_L}^{\phi}].$$
\end{defn}

In \cite{Thomas}, R. Thomas defined the K-theoretical Vafa-Witten invariants using the virtual structure sheaf  and an orientation of the moduli spaces.  His invariants are the K-theoretical refinement of the invariants $\VW(S)$. 
Our  motivic Vafa-Witten invariants are only the motivic refinement of the invariants $\vw(S)$. They give the same refinement for K3 surfaces.

The locus $N\subset N^{\perp}_L$ consisting of Higgs pairs $(E,\phi)$ such that the corresponding $E$ is stable is a quasi-projective subscheme of $N^{\perp}_L$.  The $\cc^*$-fixed locus of $N$ is just $M:=M_L(S,c)$, which is the moduli space of stable torsion free coherent sheaves with topological invariants $c$. This fixed component is the Instanton Branch of the  $\cc^*$-fixed locus of $N_L^{\perp}$, see \cite{TT}.  
The space $N\to M$ is an affine cone over $M$, and is just the situation in Theorem \ref{intro_thm_general_motivic_localization}. Thus Theorem \ref{intro_thm_general_motivic_localization} gives a calculation of the motivic Vafa-Witten invariants. 

Let $\mMF_N^{\phi}:=\mMF_{N^{\perp}_L}^{\phi}|_{N}$ be the restriction of the global motive on $N$. In this paper we calculate the motivic Vafa-Witten invariants for K3 surfaces.  We fix in $c_0=(r,c_1,c_2)$ the rank $r$ and $c_1$ are coprime, so that the moduli space 
$M_L(S,c_0)$ is an irreducible smooth variety.   We form the generating series 
$$\vw^M_{r,c_1}(S)=\sum_{c_2}\vw^M_{(r,c_1, c_2)}(S)q^{c_2}.$$
We have the following result:

\begin{thm}\label{thm_motivic_series_vw_K3}
Let $S$ be a smooth projective K3 surface.  Fix the rank $r$ and the first Chern class $c_1$ such that they are coprime. 
The motivic generating series of motivic Vafa-Witten invariants is given by:
$$\vw_{r, c_1}^M(S)=q^{r-\frac{1}{r}-\frac{1-r}{2r}c_1^2}\cdot\frac{1}{r}\sum_{j=0}^{r-1}e^{\pi i \frac{r-1}{r}j c_1^2}\cdot\prod_{n\geq 1}\left(1-\left(e^{\frac{2\pi i j}{r}}q^{\frac{1}{r}}\right)^n\right)^{-\ll^{-1}[S]}.$$
\end{thm}

We prove Theorem \ref{thm_motivic_series_vw_K3} using Theorem  \ref{intro_thm_general_motivic_localization}, and reduce the calculation to the general motive of $M$.  From Yoshioka \cite{Yoshioka}, $M$ is birational equivalent to the Hilbert scheme of points on $S$. Everything is reduced to the calculation of the motive of the Hilbert scheme of points on $S$, and then we use G\"ottsche's result in \cite{Gottsche}. 

Recall that there is a ring homomorphism $e: K_0(\Var_{\cc})\to \zz[u,v]$ from the Grothendieck ring of varieties to $\zz[u,v]$ given by the Hodge-Deligne polynomial of varieties.  The $\chi_y$-genus is just $e(y,1)$, i.e., $u=y, v=1$.  If we define 
$$\vw^{\chi}_{r,c_1}(S)=\sum_{c_2}\vw^{\chi}_{(r,c_1, c_2)}(S)q^{c_2}$$
as the generating series of the $\chi_y$-genus of the Vafa-Witten invariants, where 
$$\vw^{\chi}_{(r,c_1, c_2)}(S):=\chi_y(M_L(S, (r,c_1, c_2))).$$
As a corollary of Theorem \ref{thm_motivic_series_vw_K3} we calculate the $\chi_y$-genus:

\begin{cor}\label{cor_chi_y_series_vw_K3}
Let $S$ be a smooth projective K3 surface.  Fix the rank $r$ and the first Chern class $c_1$ such that they are coprime. 
Then the  $\chi_y$-genus of the  Vafa-Witten invariants is given by:
$$\vw_{r, c_1}^{\chi}(S)=q^{r-\frac{1}{r}-\frac{1-r}{2r}c_1^2}\cdot \frac{1}{r}\sum_{j=0}^{r-1}e^{\pi i \frac{r-1}{r}j c_1^2}\cdot\widetilde{\Delta}(e^{\frac{2\pi i j}{r}}q^{\frac{1}{r}}, y)^{-1},$$
where 
$$\widetilde{\Delta}(q, y):=\prod_{n\geq 1}\left(1-q^n\right)^{20}\left(1-y q^n\right)^{2}\left(1-y^{-1}q^n\right)^{2}.$$
\end{cor}
Thus we get similar result as in \cite{Thomas} where R. Thomas uses K-theoretical Vafa-Witten invariants to calculate the case of K3 surfaces, and also prove Conjecture 4.6 in \cite{GK} using motivic method.

\medskip
\noindent\textbf{Discussion.}  Our motivic series of the Vafa-Witten invariants give the same result as in \cite{Thomas} for K3 surfaces, since for a K3 surface $S$, $\VW(S)=\vw(S)$. As pointed out in \cite{TT}, the big Vafa-Witten invariants  $\VW(S)$ are the correct invariants for the S-duality. It is especially interesting for general type surfaces. 

Vafa-Witten actually predicted an S-transformation formula from the gauge group $\SU(r)$ to its Langlands dual $\SU(r)/\zz_r$.  In the case of K3 surfaces, the prime rank case is proved in \cite{Jiang_2019}, and higher rank case is studied in \cite{Jiang_Tseng}. The method we use is moduli space of optimal gerbe twisted sheaves on K3 surfaces.   The K-theoretical and elliptic genus version of the S-duality conjecture for K3 surfaces are proved in \cite{JM}.  The K-theoretical refinement 
version of the S-duality implies an S-duality transformation  for motivic Vafa-Witten invariants.

\medskip
\noindent\textbf{Outline.} This  note is organized as follows. In Section \ref{constr} we  review the construction of the cone  $N$, and prove that $N$ is a $d$-critical scheme.  We prove a motivic localization formula of the oriented $d$-critical scheme $N$ under $\Gm$-action in Section \ref{sec_motivic_localization_formula}, and apply it to get the weighted Euler characteristic of  the cone $N$. 
Finally in Section \ref{sec_motivic_VW} we apply the motivic localization formula to calculate the motivic generating series of the motivic Vafa-Witten invariants for K3 surfaces. 
\medskip

\noindent\textbf{Acknowledgements.} This paper is motivated by the study by Tanaka-Thomas \cite{TT}, \cite{TT2} on the Vafa-Witten invariants for projective surfaces.   I would like to thank R. Thomas for sending me the above papers before posting on arXiv.  Many thanks to B. Szendroi  for the valuable discussion on motivic invariants of Hilbert schemes of points on $\cc^3$,  and  S. Payne for the discussion on motivic nearby cycles and motivic Minor fibers via semi-algebraic sets. 
Y. J.  is partially supported by  NSF Grant DMS-1600997.
\medskip

\noindent\textbf{Notation.} Throughout the paper  we work for with a projective scheme $M$ over
$\kappa$ with perfect obstruction theory $E^{\bullet}\to\LL_M$. The one dimensional torus is denoted by  $\Gm$.
When we work on the motivic Vafa-Witten theory everything is over $\cc$ and $\Gm=\cc^*$.

\section{Preliminaries on the cone $N$} \label{constr}
We briefly recall the construction of the abelian cone $N$ in \cite{JT}. 

\subsection{Abelian cones}
Let  $F$ be coherent sheaf over $M$. There is an associated cone
$$
C(F):=\Spec\Sym^\bullet F\Rt{\pi_F}M
$$
over $M$. Cones of this form are called \emph{abelian} in \cite[Section 1]{BF}. The grading on $\Sym^\bullet F$ endows $C(F)$ with a $\Gm$-action 
$$\Gm\times C(F)\To C(F)$$
induced by the map
$$\Sym^\bullet F[x,x^{-1}]\longleftarrow\Sym^\bullet F\ \ $$
that takes $s\in \Sym^i F$ to $sx^i$. Its fixed locus is the zero section $M\subset C(F)$ defined by the ideal $\Sym^{\ge1}F$.

When $F$ is locally free $C(F)=\mathrm{Tot}(F^*)$ is the total space of the dual vector bundle. More generally, for any $F$, the fibre of $C(F)$ over a closed point $p\in M$ is the vector space $(F|_p)^*$. In fact $C(F)$ represents the functor from $M$-schemes to sets that takes $f\colon S\to M$ to $\Hom_S(f^*F,\O_S)$.

\subsection{The cone $N$}
We fix a perfect obstruction theory
$$
E\udot\To\LL_M
$$
of virtual dimension
$$
\vd:=\rk(E\udot)
$$
on the complex projective scheme $M$.

Applying the results of the last section to the obstruction sheaf $$\Ob_M\!:=h^1\big((E\udot)^\vee\big),$$ we define $\pi=\pi_N\colon N\to M$ to be the associated abelian cone,\footnote{Another way to describe $N$ is as the coarse moduli space of the vector bundle stack $h^1/h^0\big((E\udot)^\vee\big)$ of \cite[Section 2]{BF}.}
\beq{defnN}
N:=C(\Ob_M)=\Spec\Sym^\bullet(\Ob_M)\Rt\pi M.
\eeq

\subsection{Local model} 

Locally we may choose a presentation of $(M,E\udot)$ as the zero locus of a section $s$ of a vector bundle $E\to A$ over a smooth ambient space $A$, such that the resulting complex
$$
\big\{T_A|_M\Rt{ds}E|_M\big\} \quad\text{is}\quad
\big\{E_0\To E_1\big\}=(E\udot)^\vee.
$$
Then we have  $E\udot$ as $E^{-1}\to E^0$, we get the exact sequence
$$
E_0\stackrel{\phi}{\longrightarrow} E_1\To\Ob_M\To0.
$$
The resolution gives an exact sequence
$$
\phi(E_0)\otimes\Sym^{\bullet\;-1}E_1\To\Sym^\bullet E_1\To\Sym^\bullet \Ob_M\To0.
$$
That is,
$$
\Spec\Sym^\bullet \Ob_M\ \subset\ \Spec\Sym^\bullet E_1
$$
with ideal generated by $\phi(E_0)$. Letting $\tau$ denote the tautological section of $\pi_{E_1}^*E^{-1}$, this says that\smallskip
\beq{sentence}
C(\Ob_M) \text{ is cut out of }C(E_1)=\mathrm{Tot}\;(E^{-1}) \text{ by the section $\pi_{E_1}^*\phi^*(\tau)$ of } \pi_{E_1}^*E^0.
\eeq
Therefore $N=C(\Ob_M)$ is cut out of Tot$\;(E^*)|_M$ by the section $\pi_E^*(ds)^*(\tau)$ of $\pi_E^*\Omega_A|_M$. In turn Tot$\;(E^*)|_M$ is cut out of Tot$\;(E^*)$ by $\pi_E^*s$. Therefore the ideal of $N$ in the smooth ambient space Tot$\;(E^*)$ is
\beq{ideal}
\big(\pi_E^*s,\,\pi_E^*(Ds)^*(\tau)\big),
\eeq
where we have chosen any holomorphic connection $D$ on $E\to A$ by shrinking $A$ if necessary.

Thinking of the section $s$ of $E\to A$ as a linear function $\widetilde s$ on the fibres of Tot$\;(E^*)$, we find that its critical locus is $N$.

\begin{prop}\label{locCS}(\cite[Proposition 2.8]{JT})
$N\subset\mathrm{Tot}\;(E^*)$ is the critical locus of the function
$$
\widetilde s\colon\mathrm{Tot}\;(E^*)\to\kappa\;.
$$
\end{prop}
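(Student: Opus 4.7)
The plan is to show that the defining ideal of $\Crit(\widetilde{s})$ in $\mathrm{Tot}(E^*)$ coincides locally with the two-generator ideal \eqref{ideal} already shown to cut out $N$. Both subschemes live inside the smooth variety $\mathrm{Tot}(E^*)$, so the statement is local and it is enough to match these ideals on a trivializing neighborhood.

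I would fix coordinates $x_1,\ldots,x_n$ on a small open $U\subset A$ together with a local frame $e_1,\ldots,e_r$ of $E$, and use the dual fiber coordinates $\xi_1,\ldots,\xi_r$ to get coordinates $(x,\xi)$ on $\mathrm{Tot}(E^*)|_U$. If $s=\sum_i s_i(x)\,e_i$, then the tautological section of $\pi_E^*E^*$ is $\tau=\sum_i\xi_i\,e_i^*$, and the fiberwise-linear function $\widetilde{s}=\langle\tau,\pi_E^*s\rangle$ becomes $\widetilde{s}=\sum_i\xi_i\,s_i(x)$. Expanding $d\widetilde{s}$ into its $d\xi_i$ and $dx_j$ components reads off the critical ideal as the one generated by the $\{s_i(x)\}$ and the $\{\sum_i\xi_i\,\partial s_i/\partial x_j\}$. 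Taking $D$ in \eqref{ideal} to be the trivial connection in the chosen frame, $\pi_E^*s$ and $\pi_E^*(Ds)^*(\tau)$ are generated by precisely these same two sets of functions, and the ideals agree.

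To finish I would check that \eqref{ideal} is independent of the choice of holomorphic connection $D$: replacing $D$ by $D+\Gamma$ alters $(Ds)^*(\tau)$ by a term of the form $\Gamma^*(\tau)\cdot s$, which lies in the ideal generated by $\pi_E^*s$, so the two-generator ideal is unchanged. I do not anticipate any real obstacle; the whole content is the elementary observation that the vertical component of $d\widetilde{s}$ is $\pi_E^*s$ (the vertical derivative of a fiberwise-linear function is the associated section), while its horizontal component, in any splitting of $T_{\mathrm{Tot}(E^*)}$, is $\pi_E^*(Ds)^*(\tau)$. A coordinate-free version of this uses the short exact sequence $0\to\pi_E^*\Omega_A\to\Omega_{\mathrm{Tot}(E^*)}\to\pi_E^*E\to 0$: one identifies the image of $d\widetilde{s}$ in $\pi_E^*E$ with $\pi_E^*s$ directly, and then splits the sequence via $D$ to pick off the horizontal piece.
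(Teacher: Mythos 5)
Your proof is correct and follows essentially the same route as the paper: in local coordinates $\widetilde s=\sum_j s_j y_j$, and $d\widetilde s=\sum_j y_j\,ds_j+\sum_j s_j\,dy_j=\langle\tau,\pi_E^*Ds\rangle+\sum_j s_j\,dy_j$ has zero scheme cut out by precisely \eqref{ideal}. The added remarks on independence of the choice of connection and the coordinate-free splitting $0\to\pi_E^*\Omega_A\to\Omega_{\mathrm{Tot}(E^*)}\to\pi_E^*E\to 0$ are correct but do not change the argument.
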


\section{The global sheaf of vanishing cycles  on $N$}

\subsection{$d$-critical schemes}

Let us first recall the notion of $d$-critical schemes introduced in \cite{Joyce}.   For any scheme $N$, Joyce \cite[Theorem 2.1]{Joyce} proves that there exists a canonical sheaf of $\cc$-vector spaces $\sS_N$ on $N$.  This sheaf $\sS_N$ satisfies the following property: 
for any Zariski open subset $\widetilde{R}\subset N$ and a closed embedding $i: \widetilde{R}\hookrightarrow \widetilde{A}$ into a smooth scheme 
$\widetilde{A}$, there is an exact sequence:
\begin{equation}\label{eqn_exact_d_critical_scheme}
0\to \sS_N|_{\widetilde{R}}\longrightarrow \oO_{\widetilde{A}}/I^2\stackrel{d_{DR}}{\longrightarrow} \Omega_{\widetilde{A}}/I\cdot \Omega_{\widetilde{A}}\to 0,
\end{equation}
where $I\subset \oO_{\widetilde{A}}$ is the ideal sheaf of $\widetilde{R}$ and $d_{DR}$ is the de-Rham differential.  Joyce shows that there is a natural decomposition 
$$\sS_N=\sS_N^{0}\oplus \cc_{N}$$
where $\cc_N$ is the constant sheaf on $N$.  This sheaf $\sS^{0}_N$, when restricted to $\widetilde{R}$, is the kernel of the composition
$$\sS_N|_{\widetilde{R}}\hookrightarrow \oO_{\widetilde{A}}/I^2\twoheadrightarrow \oO_{\widetilde{R}^{\red}}$$
and $\widetilde{R}^{\red}$ is the reduced scheme of $\widetilde{R}$.
We recall the $d$-critical scheme structure of Joyce.

\begin{defn}\label{defn_d_critical_scheme}(\cite[Definition 2.5]{Joyce})
A $d$-critical scheme is given by a pair $(N, s)$, where $N$ is a scheme and $s\in \Gamma(N, \sS_N^{0})$ is a section such that for any point 
$x\in N$, there is an open neighborhood $x\in \widetilde R\subset N$,  a closed embedding $i:  \widetilde{R}\hookrightarrow \widetilde{A}$ into a smooth scheme $\widetilde{A}$, and a regular function $f:  \widetilde{A}\to \mathbb{A}^1$ such that $s|_{\widetilde{R}}=f+I^2$.  We call the data
$(\widetilde{R}, \widetilde{A}, f, i)$ a $d$-critical chart. 
\end{defn}

Therefore a $d$-critical scheme $(N,s)$ is roughly understood as locally the critical locus of some regular function $f$ on a smooth scheme, and the section $s$
remembers the local regular function $f$.  In particular if $N=\Crit(f)$ is the critical locus of a regular function $f: \widetilde{A}\to \mathbb{A}^1$, then $N$ is a $d$-critical scheme and the sheaf $\sS_N$ is given by (\ref{eqn_exact_d_critical_scheme}).

\subsection{Semi-symmetric obstruction theory}\label{subsec_semi-symmetric_obstruction_theory}

In this section we review a bit of the semi-symmetric obstruction theory in \cite{CL}, \cite{Jiang5}.

\begin{defn}(\cite[Definition 3.1]{CL})\label{defn_semiPOT}
A semi-perfect obstruction theory of $N$ consists of a covering $\{N_{\alpha}\}_{\alpha\in\Lambda}$ of 
$N$ by affine schemes, and truncated perfect obstructin theories
$$\phi_{\alpha}: E_{\alpha}\to L_{N_\alpha}, \alpha\in\Lambda$$
such that 
\begin{enumerate}
\item for any pair $\alpha, \beta\in \Lambda$ there exists an isomorphism 
\begin{equation}\label{isomorphism_psi_alphabeta}
\psi_{\alpha\beta}: h^1(E_{\alpha}^\vee)|_{N_{\alpha\beta}}\stackrel{\cong}{\longrightarrow}
h^1(E_{\beta}^\vee)|_{N_{\alpha\beta}}
\end{equation}
such that the collections $(h^1(E_{\alpha}^\vee), \psi_{\alpha\beta})$ forms a descent data of sheaves.
\item for any pair $\alpha, \beta\in \Lambda$, the obstruction theories 
$\phi_{\alpha}|_{N_{\alpha\beta}}$ and $\phi_{\beta}|_{N_{\alpha\beta}}$ are $\nu$-equivalent via $\psi_{\alpha\beta}$, which means that 
$\psi_{\alpha\beta}$ is compatible with the infinitesimal lifting problem for the obstruction theories, see \cite[Definition 2.9]{CL}, and \cite[Definition 3.5]{Jiang5}.

\end{enumerate}
A symmetric semi-perfect obstruction theory on $N$ is a semi-perfect obstruction theory $\phi=\{\phi_\alpha, N_\alpha, E_\alpha, \psi_{\alpha\beta}\}_{\alpha\in\Lambda}$ for $N$ such that each $\phi_{\alpha}$ is symmetric in the sense of \cite{Behrend}. 

A (symmetric) perfect obstruction theory is a (symmetric) semi-perfect obstruction theory.  
\end{defn}

For a symmetric semi-perfect obstruction theory $\phi=\{\phi_\alpha, N_\alpha, E_\alpha, \psi_{\alpha\beta}\}_{\alpha\in\Lambda}$ for $N$, the local virtual cycles glue to give a virtual fundamental cycle $[N]^{\virt}\in A_0(N)$.  Behrend's theorem equating the weighted Euler characteristic $\chi(N,\nu_N)$ with the virtual count 
$\int_{[N]^{\virt}}1$ is still true, see \cite[Theorem 3.8]{Jiang5} if $N$ is proper.  If $N$ is non-proper, it does not make sense for the integral, but 
 $\chi(N,\nu_N)$ exists as an invariant. 
 
\begin{prop}\label{prop_symmetric_semi_POT}(\cite[Theorem 4.6]{Jiang5})
There exists a symmetric semi-oerfect obstruction theory $\phi=\{\phi_\alpha, N_\alpha, E_\alpha, \psi_{\alpha\beta}\}_{\alpha\in\Lambda}$ for a 
$d$-critical scheme $(N,s)$. 
\end{prop}

\subsection{$N$ is a $d$-critical scheme}\label{subsec_N_d_critical_scheme}
We show that our cone $N$ in (\ref{defnN}) is a $d$-critical scheme. 

\begin{prop}\label{prop_d_critical_scheme_N}
The scheme $N$  in (\ref{defnN}) is a  $d$-critical scheme in the sense of \cite[Definition 2.5]{Joyce}. 
\end{prop}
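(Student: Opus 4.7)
The plan is to promote the local critical-chart data of Proposition \ref{locCS} to a global $d$-critical structure using Joyce's gluing criterion, and then appeal to \cite{Dave, NO} for the orientation.

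First, I would cover $M$ by affine opens $M_\alpha$ on each of which the obstruction theory is presented as in \S2.3: $M_\alpha = \{s_\alpha = 0\}$ inside a smooth ambient scheme $A_\alpha$, with $s_\alpha$ a section of a vector bundle $E_\alpha \to A_\alpha$ whose pair $T_{A_\alpha}|_{M_\alpha} \to E_\alpha|_{M_\alpha}$ realizes $(E^\bullet)^\vee|_{M_\alpha}$. After choosing a connection $D_\alpha$ on $E_\alpha$, Proposition \ref{locCS} identifies $N_\alpha := \pi^{-1}(M_\alpha)$ with $\Crit(\tilde s_\alpha) \subset U_\alpha := \Tot(E_\alpha^*)$. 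Each quadruple $(N_\alpha, U_\alpha, \tilde s_\alpha, i_\alpha)$ is therefore a critical chart for $N$ in the sense of Joyce.

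Second, I would check that the local classes $i_\alpha^\star \tilde s_\alpha + I_{N_\alpha, U_\alpha}^2 \in \mathcal{S}_{N_\alpha}^0$ patch to a global section $s \in H^0(\mathcal{S}_N^0)$. The ambiguity in our construction consists of (i) shrinking $A_\alpha$, (ii) replacing $(E_\alpha, s_\alpha)$ by an equivalent presentation of the same obstruction theory, and (iii) changing the connection $D_\alpha$. By the stabilization / change-of-critical-chart results in \cite[\S 2]{Joyce} (compare also the derived point of view: locally $(M, E^\bullet)$ lifts to a quasi-smooth derived scheme and $N$ is its $(-1)$-shifted cotangent, so \cite{BBDJS} furnishes a canonical $d$-critical structure that is independent of the lift), any two such local presentations are related by stabilization with a nondegenerate quadratic form, under which the class of $\tilde s_\alpha$ modulo $I^2$ is invariant. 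Since $\tilde s_\alpha$ is linear along the fibers of $U_\alpha \to A_\alpha$ and vanishes to order $\ge 2$ along $N_\alpha$ (the critical locus sits in the zero section direction once the $s_\alpha$-part is used up), the induced class in $\mathcal{S}_{N_\alpha}^0$ depends only on $(M_\alpha, E^\bullet|_{M_\alpha})$. The resulting global section $s$ and the charts $(N_\alpha, U_\alpha, \tilde s_\alpha, i_\alpha)$ together endow $N$ with a $d$-critical structure in the sense of \cite[Definition 2.5]{Joyce}.

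Finally, for the orientation I would invoke \cite{Dave, NO}: in our setting the canonical bundle $K_{N,s}$ can be computed locally on a chart as $\det(E_\alpha)^{\otimes 2}|_{N_\alpha}$, because $U_\alpha = \Tot(E_\alpha^*)$ has $K_{U_\alpha} \cong \det(E_\alpha) \otimes K_{A_\alpha}$ and the Hessian of $\tilde s_\alpha$ is governed by $ds_\alpha$. Hence $\det(E_\alpha)|_{N_\alpha}$ provides a local square root, and the results of loc.\ cit.\ show that these local square roots glue to a global $K_N^{1/2}$, giving the orientation of \cite[Definition 2.31]{Joyce}.

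The main obstacle is the gluing step: verifying that the three-fold ambiguity (ambient scheme, vector bundle presentation, connection) in the construction of $\tilde s_\alpha$ falls within Joyce's equivalence of critical charts. Although conceptually transparent via the derived viewpoint, writing it down explicitly requires carefully unpacking how the connection $D$ enters in \eqref{ideal} and showing that a different choice $D'$ changes $\tilde s_\alpha$ by an element of $I_{N_\alpha, U_\alpha}^2$ — which is exactly what is needed for the class in $\mathcal{S}_{N_\alpha}^0$ to be canonical.
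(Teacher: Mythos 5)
Your proposal follows essentially the same route as the paper: build critical charts $(\tilde R_\alpha, \Tot(E_\alpha^*)|_{U_\alpha}, \tilde s_\alpha, \tilde i_\alpha)$ from Proposition~\ref{locCS}, feed them into Joyce's Theorem~2.1 to get the global section $s\in H^0(\mathcal{S}_N)$, and quote \cite{NO} for the orientation. The paper makes the gluing step concrete by writing down, for two overlapping charts related by a smooth morphism $f\colon\tilde A\to\tilde B$, the commutative diagram of short exact sequences \eqref{diagram_1}, and then invoking Joyce's sheaf-theoretic characterisation; you instead appeal to Joyce's stabilisation lemmas (and the derived picture) to argue that the ambiguity in $(A_\alpha, E_\alpha, D_\alpha)$ washes out. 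Both are asserting the same compatibility and neither of you unwinds the change-of-connection independence in full, so the level of detail is comparable, though the paper's route via \eqref{diagram_1} is closer to a check one can actually carry out by hand.

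One small slip to flag: you claim $K_{N,s}\cong\det(E_\alpha)^{\otimes 2}|_{N_\alpha}$, but Joyce's Theorem~2.28 gives $K_{N,s}|_{\tilde R^{\red}}\cong K_{\tilde A}^{\otimes 2}|_{\tilde R^{\red}}$, and since $\tilde A=\Tot(E_\alpha^*)|_{U_\alpha}$ this is $K_{A_\alpha}^{\otimes 2}\otimes\det(E_\alpha)^{\otimes 2}$ restricted to $\tilde R^{\red}$ — you dropped the $K_{A_\alpha}^{\otimes 2}$ factor even though you correctly wrote $K_{U_\alpha}\cong\det(E_\alpha)\otimes K_{A_\alpha}$ a line earlier. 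The conclusion survives, since one can shrink $A_\alpha$ to make $K_{A_\alpha}$ trivial (this is exactly the content of the paper's Proposition~\ref{prop_CY_orientation}), but the formula as stated is off.
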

\begin{proof}
We show that there exists a section $s\in \Gamma(N, \sS_N^{0})$ satisfying the properties in  Definition \ref{defn_d_critical_scheme}. 

Let $x\in N$ be a point.  Recall the projection of the cone $\pi: N\to M$. 
We denote by $\overline{x}=\pi(x)$ and $\overline{x}\in M$. 
let $\overline{x}\in R$ be an open subset in $M$. Suppose that  $R\subset A$ is a local embedding into a smooth scheme $A$. 
Let $\widetilde{R}=\pi^{-1}(R)$ and $\widetilde{R}\to R$ the projection.  We have $x\in \widetilde{R}$. 
We may shrink $R$ and $A$ if necessary so that  the vector bundle $E^*$ is trivial and there is a relative embedding 
$$\widetilde{R}\hookrightarrow \widetilde{A}:=A\times \aaa_\kappa^{\rank{E^*}}.$$
Then from the proof of \cite[Proposition 2.8]{JT}, 
one can work in local coordinates $x_i$ for $A$.  Take a basis of sections $e_j$ of $E$, we get a dual basis $f_j$ for $E^*$ and coordinates $y_j$ on the fibers of Tot$\;(E^*)$.
Then we can write $f=\sum_js_je_j,\ \tau=\sum_jy_jf_j$ and
$$\widetilde f=\sum_j s_jy_j.$$
Therefore
$$
d\;\widetilde f=\sum_jy_jds_j+\sum_js_jdy_j
=\big\langle\tau,\pi_E^*Df\big\rangle+\sum_js_jdy_j
$$
with zero scheme defined by the ideal
$$
\big(\pi_{E^*}^*(Df)^*(\tau),\,\pi_E^*s_1,\,\pi_E^*s_2,\,\ldots\big),
$$
which  is the same as \eqref{ideal}.

Then $\Crit(\s)\cong \widetilde{R}$. 
Let $\widetilde{i}: \widetilde{R}\hookrightarrow  \widetilde{A}$ be the inclusion. Then we get a $d$-critical chart 
$$(\widetilde{R},  \widetilde{A}, \widetilde f, \widetilde{i}).$$
Let $\widetilde{I}_{\tR,\tA}\subset \oO(\widetilde{A})$ be the ideal of $\widetilde{R}$ in $\widetilde{A}$.
Then we have an exact sequence:
$$0\rightarrow \mathcal{S}_{N}|_{\widetilde{R}}\stackrel{\iota_{\widetilde{R},\widetilde{A}}}{\longrightarrow}
\frac{\widetilde{i}^{-1}(\oO_{\widetilde{A}})}{\widetilde{I}_{\tR,\tA}^2}\stackrel{d_{DR}}{\longrightarrow}
\frac{\widetilde{i}^{-1}(\Omega_{\widetilde{A}})}{\widetilde{I}_{\tR,\tA}\cdot \widetilde{i}^{-1}(\Omega_{\widetilde{A}})}.$$
Then  locally the section $s\in \Gamma(N,\sS_{N}^{0})$ is given by 
$$s|_{\widetilde{R}}=\widetilde f+ \widetilde{I}_{\tR,\tA}^2.$$

The  $d$-critical chart exists around every point $x\in \widetilde{R}\subset N$.  
To show $s|_{\widetilde{R}}$ glue to give a section $s\in \Gamma(N,\sS_N^{0})$, 
Let $(\widetilde{S},  \widetilde{B}, \widetilde g, \widetilde{j})$ be another chart such that 
$x\in \widetilde S$, where 
$$\tS\hookrightarrow \tB:=B\times\aaa_{\kappa}^{\rank E^\prime}$$
for a smooth scheme $B$ and vector bundle $E^\prime\to B$ over $B$. 
Let $\phi: \tR\to \tS$ be an embedding of open subsets of $N$ such that in the diagram
\[
\xymatrix{
\tR\ar[r]^{}\ar[d]_{\phi}& \widetilde{A}\ar[d]^{\Phi}\\
\tS\ar[r]^{}& \widetilde{B}
}
\]
$\Phi$ is a smooth embedding since shrinking $A$ and $B$ if necessary, we have 
$ \widetilde{B} =\widetilde{A}\times \aaa_{\kappa}^{m}$ for some $m\in \zz_{>0}$.  
This can be seen as follows.  Let $\pi: \widetilde{S}\to S\subset M$ and $\pi: \widetilde{R}\to R\subset M$ be the projections, one can take local sections
$(s_1^\prime, \cdots, s_{r^\prime}^{\prime})$
of the bundle $E^\prime \to B$ with zero locus $S$, such that  its restriction to 
$E\to A$ gives the zero locus $R$ of the local sections $(s_1,\cdots, s_r)$ of $E\to A$.  Thus shrinking $A$ and $B$ if necessary, we may assume 
$ \widetilde{B} =\widetilde{A}\times \aaa_{\kappa}^{m}$ for some $m\in \zz_{>0}$. And the local functions $g=\sum_i s_i^\prime\cdot y_i^\prime$ restrict to give the local function $f$. 
Then we have the following diagram:
\begin{equation}\label{diagram_1}
\xymatrixcolsep{3pc}\xymatrix{
0\ar[r]&\left(\mathcal{S}_{N}|_{\widetilde{S}}\right)|_{\tR}\ar[r]^{\iota_{\tS,\widetilde{B}}}\ar[d]^{\id}& \frac{\widetilde{j}^{-1}(\oO_{\widetilde{B}})}{I_{\tS,\widetilde{B}}^2}|_{\tR}\ar[r]^{d}\ar[d]^{i^{-1}(\Phi^{\#})}& \frac{\widetilde{j}^{-1}(\Omega_{\widetilde{B}})}{I_{\tS,\widetilde{B}}\cdot \widetilde{j}^{-1}(\Omega_{\widetilde{B}})}|_{\tR}\ar[r]\ar[d]^{i^{-1}(d\Phi)}&0 
\\
0\ar[r]&\mathcal{S}_{N}|_{\tR}\ar[r]^{\iota_{\tR,\widetilde{A}}}& \frac{\widetilde{i}^{-1}(\oO_{\widetilde{A}})}{I_{\tR,\widetilde{A}}^2} \ar[r]^{d}& 
\frac{\widetilde{i}^{-1}(\Omega_{\widetilde{A}})}{I_{\tR,\widetilde{A}}\cdot \widetilde{i}^{-1}(\Omega_{\widetilde{A}})}\ar[r]&0.
}
\end{equation}
The local function $\widetilde g$ restrict to give the local function $\widetilde f$,  therefore the element $g+I^2_{\tS,\widetilde{B}}$ restricts to give the element $f+I^2_{\tR,\widetilde{A}}$.

Now for any two $d$-critical charts $(\widetilde{R},  \widetilde{A}, \widetilde f, \widetilde{i})$ and $(\widetilde{S},  \widetilde{B}, \widetilde g, \widetilde{j})$ of $N$, 
then there is a chart  $(\widetilde R\cap \widetilde S, \widetilde W, \widetilde h,, \widetilde k)$ on the intersection $\widetilde R\cap \widetilde S$, such that there are embeddings of the charts 
$$(\widetilde R\cap \widetilde S, \widetilde W, \widetilde h,, \widetilde k)\hookrightarrow (\widetilde{R},  \widetilde{A}, \widetilde f, \widetilde{i})$$ 
and 
$$(\widetilde R\cap \widetilde S, \widetilde W, \widetilde h,, \widetilde k)\hookrightarrow (\widetilde{S},  \widetilde{B}, \widetilde g, \widetilde{j})$$ 
such that the local functions $\widetilde f$ and  $\widetilde g$ gives the local function $\widetilde h$. 
Hence there is a section 
$s\in H^0(\mathcal{S}^{0}_{N})$ and $(N,s)$ is a $d$-critical scheme. 
\end{proof}

\subsection{Orientation on $d$-critical schemes}

Let $(N,s)$ be a $d$-critical scheme.  There exists a line bundle $K_{N,s}$ on $N^{\red}$, called the virtual canonical line bundle.  
From 
 \cite[Theorem 2.28 ]{Joyce}  $K_{N,s}$ is the unique line bundle on $N$ such that 
on the $d$-critical chart $(\widetilde{R},  \widetilde{A}, \s, \widetilde{i})$, there is a natural isomorphism 
$$\iota: K_{N,s}|_{\widetilde{R}^{\red}}\stackrel{\cong}{\longrightarrow} 
\widetilde{i}^{\star}(K_{\widetilde{A}}^{\otimes 2})|_{\widetilde{R}^{\red}}.$$

\begin{defn}\label{defn_orientation}(\cite[Definition 2.31]{Joyce})
A $d$-critical  scheme $(N,s)$ is called $oriented$ if the line bundle  $K_{N,s}$ admits a square root $K_{N,s}^{\frac{1}{2}}$
on $N^{\red}$ such that 
$$\left(K_{N,s}^{\frac{1}{2}}\right)^{\otimes 2}\cong K_{N,s}.$$ 
We call such  a $d$-critical  scheme $(N,s)$ an oriented $d$-critical scheme. 
\end{defn}

\begin{defn}(\cite[Definition 2.8]{MT})
A $d$-critical scheme $(N,s)$ is Calabi-Yau if the virtual canonical line bundle $K_{N,s}\cong \oO_{N^{\red}}$ is trivial.  An oriented 
$d$-critical scheme $(N,s, K_{N,s}^{\frac{1}{2}})$ is Calabi-Yau of $K_{N,s}^{\frac{1}{2}}\cong  \oO_{N^{\red}}$.
\end{defn}

\begin{rmk}
It is interesting to see if our cone $N$ is a Calabi-Yau $d$-critical scheme.  Maulik-Toda \cite{MT} introduced the Calabi-Yau structure on a projective morphism 
$N\to B$ (for a scheme $B$) at a point $b\in B$, which means there exists an open neighborhood $b\in U\subset B$ such that $(N|_{U}, s|_{U})$ is Calabi-Yau.  The projection morphism $\pi: N\to M$ is not projective, therefore it is not interesting to have Calabi-Yau structure at a point $x\in M$. 
\end{rmk}

The cone $N$ in  (\ref{defnN}) admits a $\Gm$-action whose fixed point loci is $M\subset N$.  As pointed out in \cite{JT}, it is not known if  the scheme $N$ admits a symmetric obstruction theory of \cite{Behrend}, but there is one locally since $N$ is the critical locus of a regular function on a higher dimensional smooth scheme.  In \cite{JT}, the authors assume that the scheme $M$ is the underlying scheme of a quasi-smooth derived scheme, and $N$ is the $(-1)$-shifted cotangent bundle of $M$, therefore admits a symmetric obstruction theory.  

From Proposition \ref{prop_symmetric_semi_POT}, the $d$-critical scheme $(N,s)$ admits a symmetric semi-perfect obstruction theory
$\phi=\{\phi_\alpha, N_\alpha, E_\alpha, \psi_{\alpha\beta}\}_{\alpha\in\Lambda}$ as in Definition \ref{defn_semiPOT}. 
The $\Gm$-action on $N$ induces an action on $\phi=\{\phi_\alpha, N_\alpha, E_\alpha, \psi_{\alpha\beta}\}_{\alpha\in\Lambda}$ and makes it a $\Gm$-equivariant symmetric semi-perfect obstruction theory. 
Note that if $N=\bigcup_{\alpha\in\Lambda}N_{\alpha}$, then $M=\bigcup_{\alpha\in\Lambda}(N_{\alpha}\cap M)$.
Here is a generalization of 
\cite[Proposition 2.6]{Thomas}:

\begin{prop}\label{prop_Thomas_orientation} 
The canonical line bundle  $K_{N,s}$ 
when restricted to the fixed point locus $M$ has  a canonical square root such that for each $\alpha\in \Lambda$, 
$$K_{N,s}^{\frac{1}{2}}|_{\pi(N_{\alpha})}=\det(E_{\alpha}|_{M\cap N_{\alpha}})^{\geq 0}\mathfrak{t}^{\frac{1}{2}\rk_{\geq 0}},$$
where $(E_{\alpha}|_{M\cap N_{\alpha}})^{\geq 0}$ denotes the part of  $E_{\alpha}|_{M\cap N_{\alpha}}$ with nonnegative $\Gm$-weights, and $\rk_{\geq 0}$ is its rank. 
\end{prop}
\begin{proof}
We can take the chart $N_{\alpha}$ to be a $d$-critical chart
$(N_{\alpha}, \widetilde f, \widetilde A, \widetilde i)$, then  the symmetric obstruction theory $E_{\alpha}$ is given by
$$E_{\alpha}=\Big[T_{\widetilde A}\stackrel{d\circ d\widetilde f^{\vee}}{\longrightarrow} \Omega_{\widetilde A}\Big]$$
Therefore $K_{N,s}|_{N^{\red}_{\alpha}}=\det (E_{\alpha})=\widetilde i^{\star}K_{\widetilde A}^{\otimes 2}|_{N_{\alpha}^{\red}}$.  The local calculation is the same as in the proof of \cite[Proposition 2.6]{Thomas}.  Since all the local data glue, we are done. 
\end{proof}

\subsection{Motivic vanishing cycles}\label{subsec_motivic_vanishing_cycle}

In this section we review the motivic vanishing cycles for $d$-critical schemes constructed  in \cite{BJM}. 
For a scheme $N$, let $K_0(\Var_N)$ be the Grothendieck ring of schemes over the scheme $N$.  It is the abelian group generated by symbols 
$[T]$ for $T\to N$ an $N$-scheme with relations $[S]=[T]$ if $S\cong T$ as $N$-schemes, and if $T\subseteq S$ is a closed $X$-subscheme, then
$[S]=[T]+[S\setminus T]$.  The ring structure on  $K_0(\Var_N)$ is given by $[S]\cdot [T]=[S\times_{N}T]$.
Let $\aaa_N^1$ be the $N$-scheme $\pi_N: \aaa^1\times N\to N$ and we denote it by $\ll:=[\aaa_N^1]$.

For all the positive integers $n\in\zz_{>0}$, we consider the cyclic group $\mu_n$.  These groups $\mu_n$ form a projective system with respect to the maps 
$\mu_{nd}\to \mu_n$ mapping $x\mapsto x^d$ for all $d, n\in\zz_{>0}$.  We let $\hat{\mu}$ be the projective limit of the groups $\mu_n$.  $\hat{\mu}$ is a pro-scheme.  Let $T\to N$ be an $N$-scheme.  A good $\mu_n$-action on $T$ is a group action 
$\sigma: \mu_n\times T\to T$ such that each orbit is contained in an affine subscheme of $T$. If $T$ is quasi-projective then any action of $\mu_n$ on $T$ is a good action.   A good $\hat{\mu}$-action on $T$ is an action $\hat{\sigma}: \hat{\mu}\times T\to T$ which factors through a good $\mu_n$-action for some $n$. 
The trivial action is denoted by $\hat{\iota}: \mu_n\times T\to T$ which is good. 

The equivariant  Grothendieck ring of schemes $K^{\hat{\mu}}_0(\Var_N)$ is, as an abelian group, generated by the symbols 
$[T,\hat{\sigma}]$ where $T\to N$ is an $N$-scheme with a good $\hat{\mu}$-action $\hat{\sigma}$. The relations are given by:

$[T,\hat{\sigma}]=[S,\hat{\tau}]$ is $T, S$ are isomorphic as $N$-schemes with $\hat{\mu}$-actions;

$[T,\hat{\sigma}]=[S,\hat{\sigma}|_{S}]+ [T\setminus S, \hat{\sigma}|_{T\setminus S}]$ if $S\subseteq T$ is a closed, 
$\hat{\mu}$-invariant $N$-subscheme of $T$; and 

$[T\times \aaa^n, \hat{\sigma}\times\hat{\tau}_1]=[T\times \aaa^n, \hat{\sigma}\times\hat{\tau}_2]$ for any linear $\hat{\mu}$-actions $\hat{\tau}_1, \hat{\tau}_2$
on $\aaa^n$.

The ring structure on  $K^{\hat{\mu}}_0(\Var_N)$  is given by 
$[T,\hat{\sigma}]\cdot [S,\hat{\tau}]=[T\times_{N}S, \hat{\sigma}\times\hat{\tau}]$.   Still let 
$\ll=[\aaa_N^1, \hat{\iota}]$ be the Lefschetz motive in $K^{\hat{\mu}}_0(\Var_N)$. 
We define 
$$\mM_{N}^{\hat{\mu}}=K^{\hat{\mu}}_0(\Var_N)[\ll^{-1}]$$
to be the ring by inverting $\ll$.  If $N=\Spec \kappa$, then   $K^{\hat{\mu}}_0(\Var_N)$ and $\mM_{N}^{\hat{\mu}}$ are written as 
$K^{\hat{\mu}}_0(\Var_{\kappa})$ and $\mM_{\kappa}^{\hat{\mu}}$. 
Some properties of the  $K^{\hat{\mu}}_0(\Var_N)$ and $\mM_{N}^{\hat{\mu}}$ are given in \cite[\S 2]{BJM}, we refer the reader to \cite{BJM} for more details. 
\cite{BJM} also introduced a product $\odot$ on  $K^{\hat{\mu}}_0(\Var_N)$ and $\mM_{N}^{\hat{\mu}}$ which we recall here:

\begin{defn}\label{defn_odot_product}(\cite[Definition 2.3]{BJM})
Let $[T, \widehat{\sigma}], [S,\widehat{\tau}]$ be two elements in $K_0^{\hat{\mu}}(\Var_{N})$ or $\mM_{N}^{\hat{\mu}}$. 
Then there exists $n\geq 1$ such that the $\hat{\mu}$-actions $\widehat{\sigma}, \widehat{\tau}$ on $T, S$ factor through $\mu_n$-actions
$\sigma_n, \tau_n$.  Define $J_n$ to be the Fermat curve
$$J_n=\{(t,u)\in (\aaa^1\setminus \{0\})^2: t^n+u^n=1\}.$$
Let $\mu_n\times\mu_n$ act on $J_n\times(T\times_{N}S)$ by
$$(\alpha,\alpha^\prime)\cdot ((t,u),(v,w))=((\alpha\cdot t, \alpha^\prime\cdot u), (\sigma_n(\alpha)(v), \tau_n(\alpha^\prime)(w))).$$
Write $J_n(T,S)=(J_n\times (T\times_{N}S))/(\mu_n\times\mu_n)$ for the quotient $\kappa$-scheme, and 
define a $\mu_n$-action $v_n$ on $J_n(T,S)$ by
$$v_n(\alpha)((t,u), v,w)(\mu_n\times\mu_n)=((\alpha\cdot t, \alpha\cdot u),v,w)(\mu_n\times\mu_n).$$
Let $\hat{v}$ be the induced good $\hat{\mu}$-action on $J_n(T,S)$, and set
$$[T, \widehat{\sigma}]\odot [S,\widehat{\tau}]=(\ll-1)\cdot [(T\times_{N}S/\mu_n, \hat{\iota})]-[J_n(T,S),\hat{v}]$$
in  $K_0^{\hat{\mu}}(\Var_{N})$ or $\mM_{N}^{\hat{\mu}}$. This defines a commutative, associative product on  $K_0^{\hat{\mu}}(\Var_{N})$,  $\mM_{N}^{\hat{\mu}}$.
\end{defn}

In \cite{BJM}, for any $\kappa$-schemes $N, P$, there are products
$$\boxdot: K_0^{\hat{\mu}}(\Var_{N})\times K_0^{\hat{\mu}}(\Var_{P})\to K_0^{\hat{\mu}}(\Var_{N\times P}); \quad 
\boxdot: \mM_{N}^{\hat{\mu}}\times \mM_{P}^{\hat{\mu}}\to \mM_{N\times P}^{\hat{\mu}}$$
by mapping 
$$([T, \hat{\sigma}], [S,\hat{\tau}])\mapsto [T\times S, \hat{\sigma}\times \hat{\tau}].$$
If $P=\Spec(\kappa)$, then $N\times\Spec(\kappa)=N$, and $\boxdot$ makes $K_0^{\hat{\mu}}(\Var_{N})$,  $\mM_{N}^{\hat{\mu}}$ 
into modules over $K_0^{\hat{\mu}}(\Var_{\kappa})$, $\mM_{\kappa}^{\hat{\mu}}$.

As in \cite{BJM}, we define 
$\ll^{\frac{1}{2}}$ in $K_0^{\hat{\mu}}(\Var_{N})$ or $\mM_{N}^{\hat{\mu}}$ by:
$$\ll^{\frac{1}{2}}=[N,\hat{\iota}]-[N\times\mu_2,\hat{\rho}],$$
where $[N,\hat{\iota}]$ with trivial $\hat{\mu}$-action $\hat{\iota}$ is the identity in  $K_0^{\hat{\mu}}(\Var_{N})$ or $\mM_{N}^{\hat{\mu}}$,
and $N\times\mu_2$ is the two copies of $N$ with the nontrivial $\hat{\mu}$-action $\hat{\rho}$ induced by the left action of $\mu_2$ on itself, exchanging the two copies of $N$.  Then $\ll^{\frac{1}{2}}\odot\ll^{\frac{1}{2}}=\ll$. 

Now let $(N,s)$ be the $d$-critical scheme for the cone $N$ in Proposition \ref{prop_d_critical_scheme_N}.  Let $(\widetilde R, \widetilde A, \widetilde f, \widetilde i)$ be a $d$-critical chart of $(N,s)$.  Then the motivic vanishing cycle $\mMF_{\widetilde A, \widetilde f}^{\phi}\in \mM_{\Crit(\widetilde f)}^{\hat{\mu}}$   is defined as follows. 
The restriction $\widetilde f|_{\widetilde R}: \widetilde R\to \aaa^1$ is locally constant on $\widetilde R^{\red}$ and $\widetilde f(\widetilde R)$ is finite. 
We have $\widetilde R=\bigsqcup_{c\in \widetilde f(\widetilde R)} \widetilde R_c$ with $\widetilde R_c=\widetilde R\cap \widetilde A_c$ where $\widetilde A_c=\widetilde f^{-1}(c)\subset \widetilde A$. 
Then from \cite[Formula (2.8)]{BJM}, the motivic vanishing cycle $\mMF_{\widetilde A, \widetilde f}^{\phi}\in \mM_{\Crit(\widetilde f)}^{\hat{\mu}}$ is defined by 
\begin{equation}\label{eqn_motivic_vanishing_cycle_local}
\mMF_{\widetilde A, \widetilde f}^{\phi}|_{\widetilde R_c}=\ll^{-\frac{\dim \widetilde A}{2}}\odot 
([\widetilde A_c, \hat{\iota}]-\mMF_{\widetilde A, \widetilde f-c})|_{\widetilde R_c}\in \mMF^{\hat{\mu}}_{\widetilde R_c}
\end{equation}
for each $c\in \widetilde f(\widetilde R)$, 
where 
for the function $\widetilde f-c$ on $\widetilde A$,  $\mMF_{\widetilde A, \widetilde f-c}$ is the motivic nearby cycle 
$$\mMF_{\widetilde A, \widetilde f-c}:=\sum_{\emptyset \neq I\subseteq J}
(1-\ll)^{|I|-1}[\widetilde{E}_{I}^{\circ}\to \widetilde A_0, \hat{\rho}_{I}].$$
Here we take $p: \widetilde B\to \widetilde A$ be the resolution of singularities of $\widetilde f$.  Let 
$E_i, i\in J$ be the irreducible components of $p^{-1}(\widetilde A_0)$.  For each $i\in J$, let $N_i$ be the multiplicity of $E_i$ in the divisor of 
$\widetilde f\circ p$ on $\widetilde B$, and $\nu_i-1$ the multiplicity of $E_i$ in the divisor of $p^*(dx)$, where $dx$ is a local on vanishing 
volume form at any point of $p(E_i)$. 
For any $I\subset J$, let $E_I^{\circ}=(\cap_{i\in I}E_i)\setminus (\cup_{i\in J\setminus I}E_j)$. Let $m_I=\gcd(N_i)_{i\in I}$. There exist an unramified Galois cover 
$\widetilde{E}_{I}^{\circ}\to E_I^{\circ}$, with Galois group $\mu_{m_I}$ introduced in \cite[Definition 2.9]{BJM}.

\cite[\S 2.5]{BJM} also defined the motive of principle $\zz_2$-bundles. 
The reason is that the local regular functions $\widetilde f$, when apply the change of variables of the local coordinates, has extra terms by a quadratic polynomial, which determines a  principal $\zz_2$-bundle on $N$. 
Let $\zz_2(N)$ be the abelian group of isomorphism classes $[P]$ of principal $\zz_2$-bundles $P\to N$, where the multiplication is given by $[P]\cdot [Q]=[P\otimes_{\zz_2}Q]$  is identity is $[N\times \zz_2]$.  Each element in  $\zz_2(N)$ is self-inverse, and has order $1$ or $2$.  If $P\to N$ is a principal $\zz_2$-bundle  over $N$, then we have the motive of $P$ defined as:
$$\Upsilon(P)=\ll^{-1/2}\odot ([N,\hat{\iota}]-[P, \hat{\rho}])$$
where $\hat{\rho}$ is the $\hat{\mu}$-action on $P$ induced by the $\mu_2$-action on $P$ from the principal $\zz_2$-bundle structure.  If $P=N\times \zz_2$ is the trivial bundle, then 
$\Upsilon(N\times \zz_2)=[N,\hat{\iota}]$.
In \cite{BJM}, the authors defined the ring $\overline{\mM}^{\hat{\mu}}_{N}$, which is  roughly the quotient of  $\mM^{\hat{\mu}}_{N}$ by the relation 
$\Upsilon(P\otimes_{\zz_2}Q)-\Upsilon(P)\odot \Upsilon(Q)=0$.  In order to let the ring $\overline{\mM}^{\hat{\mu}}_{N}$ with product $\odot$ have pushforward property $\phi_*: \overline{\mM}^{\hat{\mu}}_{N}\to \overline{\mM}^{\hat{\mu}}_{P}$ under the morphism $\phi: N\to P$.  The authors define for a scheme $P$, 
the ideal $I^{\hat{\mu}}_{P}$ in $(\mM^{\hat{\mu}}_{P},\odot)$ generated by the elements 
$$\phi_*(\Upsilon(P\otimes_{\zz_2}Q)-\Upsilon(P)\odot \Upsilon(Q))$$
for all morphisms $N\to P$ and principal $\zz_2$-bundles $P, Q\to N$.   Then define $\overline{\mM}^{\hat{\mu}}_{P}$  to be the quotient 
$\mM^{\hat{\mu}}_{N}/I^{\hat{\mu}}_{P}$ with the multiplication $\odot$.  When $P=N$, and $\phi$ is identity, we get the ring  $\overline{\mM}^{\hat{\mu}}_{N}$. The motivic vanishing cycles $\mMF_{\widetilde R, \widetilde A}^{\phi}$, $\ll, \ll^{1/2}$ and $\Upsilon(P)$ are in the ring
$\overline{\mM}^{\hat{\mu}}_{N}$. 

Finally we recall the result in \cite{BJM} about the motivic vanishing cycles for an oriented $d$-critical scheme. 

\begin{thm}(\cite[Theorem 5.10]{BJM})
For our $d$-critical scheme $(N,s)$ in Proposition \ref{prop_d_critical_scheme_N},  assume that there exists an orientation 
$K_{N,s}^{1/2}$. Then there exists a unique motive $\mMF_{N,s}^{\phi}\in \overline{\mM}^{\hat{\mu}}_{N}$, called the motivic vanishing cycles of $N$, such that for each $d$-critical chart $(\widetilde R, \widetilde A, \widetilde f, \widetilde i)$, we have 
$$\mMF_{N,s}^{\phi}|_{\widetilde R}=\widetilde i^{*}(\mMF_{\widetilde A, \widetilde f}^{\phi})\otimes \Upsilon(Q_{\widetilde R, \widetilde A, \widetilde f, \widetilde i}) \quad  \text{in~} \overline{\mM}^{\hat{\mu}}_{\widetilde R}$$
where $Q_{\widetilde R, \widetilde A, \widetilde f, \widetilde i}\to \widetilde R$ is the principal $\zz_2$-bundle parametrizing local isomorphisms 
$\alpha: K_{N,s}^{1/2}|_{\widetilde R^{\red}}\to \widetilde i^* (K_{\widetilde A})|_{\widetilde R^{\red}}$ with 
$\alpha\otimes\alpha=\iota_{\widetilde R, \widetilde A, \widetilde f, \widetilde i}$ where 
$\iota_{\widetilde R, \widetilde A, \widetilde f, \widetilde i}: K_{N,s}|_{\widetilde R^{\red}}\to \widetilde i^* (K^{\otimes 2}_{\widetilde A})|_{\widetilde R^{\red}}$ is the local isomorphism. 
\end{thm}

\begin{rmk}
The orientation in Definition \ref{defn_orientation} for the moduli space of stable coherent sheaves on Calabi-Yau threefolds has been constructed in \cite{JU}.
\end{rmk}

\section{A  motivic localization formula}\label{sec_motivic_localization_formula}

We prove a version of the $\Gm$-localization formula for the global motive $\mMF_{N,s}^{\phi}$ for the oriented 
$d$-critical scheme $(N,s)$ in   Proposition \ref{prop_d_critical_scheme_N}  under $\Gm$-action scaling the fibers. 
In \cite[Theorem 7.17]{Jiang3}, the motivic localization formula for oriented  formal $d$-critical schemes and $d$-critical non-archimedean 
$\kk=\kappa(\!(t)\!)$-analytic spaces  was proved  using motivic integration for formal schemes. In \cite{Jiang3},  we assume that the $\Gm$-action on $X$ preserves the orientation $K_{N,s}^{\frac{1}{2}}$. 
The $\Gm$-action on our cone $N$ does not satisfy this condition.

\subsection{The general statement of the $\Gm$-action}
\begin{defn}
Let $(N,s)$ be a $d$-critical  scheme. 
A $\Gm$-action 
$$\mu: \Gm\times N\to N$$
is $\Gm$-invariant if $\mu(\gamma)^\star(s)=s$ for any $\gamma\in\Gm$, and 
$s\in H^0(\sS_{N})$, or equivalently 
$\mu^\star(s)=\pi_{N}^\star(s)$, where $\pi_N: \Gm\times N\to N$ is the projection. 
\end{defn}

\begin{defn}
The $\Gm$-action on a scheme $N$ is called {\em good} if any orbit is contained in an affine  subscheme of 
$N$. Equivalently, there exists an open cover of  subschemes which are $\Gm$-invariant. 
\end{defn}

Here are two results in \cite{Joyce}, and  \cite{Jiang3}. 

\begin{prop}
Let $(N,s)$ be a $d$-critical scheme which is $\Gm$-invariant under the $\Gm$-action. Then 
\begin{enumerate}
\item If the action $\mu$ is good, then any $x\in N$ there exists a $\Gm$-invariant critical chart $(R,U,f,i)$ on 
$(N,s)$ such that $\dim(T_{x}N)=\dim(U)$;
\item If for all $x\in N$ we have a $\Gm$-invariant critical chart $(R,U,f,i)$, then the action  $\mu$ is good. 
\end{enumerate}
\end{prop}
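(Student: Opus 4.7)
The two implications are essentially dual to each other, with $(2)$ being straightforward and $(1)$ containing the real content.

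For part $(2)$, I would argue as follows. Fix $x\in X$ and let $(R,U,f,i)$ be a $\Gm$-invariant critical chart at $x$. By the definition of a critical chart, $U$ is a smooth $\kappa$-scheme and $i\colon R\hookrightarrow U$ realises $R$ as the closed subscheme $\Crit(f)\subset U$; after shrinking we may take $U$ affine, so $R$ is affine as well. The $\Gm$-invariance of the chart means in particular that $R\subset X$ is $\Gm$-invariant and open, so the orbit $\Gm\cdot x$ is contained in $R$. This exhibits every orbit inside an affine $\Gm$-invariant open subscheme, which is the definition of a good action.

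For part $(1)$, the plan is to combine Joyce's existence theorem for critical charts with the reductivity of $\Gm$. First, the hypothesis that $\mu$ is good provides a $\Gm$-invariant affine open neighbourhood $R_0\subset X$ of $x$, so I may replace $X$ by $R_0$ and assume $X$ is affine with a $\Gm$-action. By Joyce's theorem there exists a (not yet equivariant) critical chart $(R,U,f,i)$ near $x$ with $\dim U=\dim T_xX$, using the standard minimal-embedding construction that picks $U$ as a smooth slice through $i(x)$ whose tangent space matches $T_xX$. To upgrade this to an equivariant chart, I would use that $\Gm$ is reductive: the $\Gm$-action on the affine $R$ lifts to a $\Gm$-equivariant closed embedding of $R$ into a $\Gm$-representation $V$, and one can thicken this to produce an ambient smooth affine $\Gm$-scheme $U'$ with a $\Gm$-equivariant inclusion $i'\colon R\hookrightarrow U'$. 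Averaging the regular function $f$ against the $\Gm$-action (or, in characteristic zero, projecting onto the invariant part of $\kappa[U']$ along the Reynolds operator) produces a $\Gm$-invariant $f'$ whose critical locus still cuts out $R$ on a neighbourhood of $i'(x)$, because $s\in H^0(\sS_X)$ is $\Gm$-invariant by hypothesis and the class represented by $f$ in $\mathcal{S}_X|_R$ is fixed. Finally, to restore the minimal-dimension condition, I would slice $U'$ by a $\Gm$-invariant smooth subscheme of dimension $\dim T_xX$ through $i'(x)$, which exists because the surjection $T_{i'(x)}U'\twoheadrightarrow T_xX$ is $\Gm$-equivariant and $\Gm$ is reductive, so its kernel has a $\Gm$-invariant linear complement inside $T_{i'(x)}U'$.

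The main obstacle is the equivariantisation step in $(1)$. Joyce's construction in the non-equivariant case proceeds by choosing local generators of the ideal of $R$ in some ambient smooth $U$, and it is not automatic that these generators can be taken $\Gm$-invariant. The key input is reductivity of $\Gm$, which guarantees both that $\Gm$-equivariant embeddings into representations exist and that Reynolds-type averaging of functions preserves the relevant ideal-theoretic data; with these two ingredients in hand, the rest of the argument is a $\Gm$-equivariant rerun of Joyce's local proof. The minimal-embedding condition $\dim U=\dim T_xX$ is then a standard slicing argument carried out equivariantly.
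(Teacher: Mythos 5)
The paper does not prove this proposition; it appears immediately after the sentence ``Here are two results in [Joyce], and [Jiang3]'' and is cited without argument, so there is no in-paper proof to compare against. Your part~(2) is essentially correct: the $R$ in a critical chart is open in $X$, becomes affine once $U$ is taken affine (which can be arranged $\Gm$-equivariantly), and $\Gm$-invariance of the chart forces $R$ to be $\Gm$-stable, so every orbit sits in a $\Gm$-invariant affine open.

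For part~(1) the overall plan --- use goodness to pass to a $\Gm$-invariant affine open, embed into a $\Gm$-representation by reductivity, and average the potential --- is the right one, but the averaging step as written has a genuine gap. You claim that the Reynolds average $f'$ of $f$ still satisfies $\Crit(f')=R$ near $i'(x)$ ``because $s$ is $\Gm$-invariant and the class represented by $f$ in $\sS_X|_R$ is fixed.'' Those hypotheses give only $f' \equiv f \pmod{I^2}$ where $I = I_{R,U'}$ is the ($\Gm$-stable) ideal of $R$, and congruence modulo $I^2$ does not control the Jacobian ideal: on $U'=\aaa^2$ with $\Gm$-weights $(1,-1)$, $R=\{0\}$, $I=(x,y)$, the potential $f=x^2+y^2$ has $\Crit(f)=R$, yet its weight-zero part is $f'=0$ with $\Crit(f')=\aaa^2$. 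What rescues the argument in Joyce's framework is that the equivariant minimal embedding --- using reductivity to split the $\Gm$-equivariant surjection $T_{i'(x)}U'\twoheadrightarrow T_xX$ and slice $U'$ to dimension $\dim T_xX$ --- is performed \emph{before} one fixes the potential; at minimal embedding dimension Joyce's comparison theory of critical charts guarantees that any two presentations of the same germ $s$ have equal critical loci after shrinking. Your sketch does the slicing last, after the average, precisely where that comparison result is unavailable. Reordering the steps (equivariantly slice first, then equivariantly lift $s|_R$ to an invariant function, then cite Joyce's comparison lemma to identify the critical locus) closes the gap and matches the actual route through the cited source.
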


\begin{prop}\label{prop_Gm_fixed_d_critical_scheme}
Let $(N,s)$ be a $d$-critical scheme which is $\Gm$-invariant under the $\Gm$-action. Let $N^{\Gm}$ be the fixed subscheme.   Then the fixed subscheme
$N^{\Gm}$ inherits a formal $d$-critical scheme structure 
$(N^\Gm, s^{\Gm})$, where $s^\Gm=i^\star(s)$ and 
$i: N^\Gm\hookrightarrow N$ is the inclusion map. 
\end{prop}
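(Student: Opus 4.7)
The statement is local on $X^\Gm$, and the $d$-critical structure on $X$ is encoded by the sheaf $\sS_X$ together with the section $s$, whose behaviour is described on critical charts. The strategy is to show: (i) around every fixed point of $X$ we can choose a $\Gm$-invariant critical chart, (ii) on such a chart the fixed locus yields a genuine critical chart for $X^\Gm$, and (iii) the resulting local sections of $\sS_{X^\Gm}$ glue to a global section $s^\Gm=i^\star(s)$.

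First I would invoke Proposition 3.3(1): for each $x\in X^\Gm$ there is a $\Gm$-invariant critical chart $(R,U,f,i)$ on $(X,s)$ with $R\ni x$ (working locally we may restrict to a $\Gm$-invariant affine open where the action is good, so that Proposition 3.3 applies). Here $U$ is smooth, $\Gm$ acts on $U$ with $i:R\hookrightarrow U$ equivariant, and $f\in\O_U$ is $\Gm$-invariant because $\mu^\star s=\pi_X^\star s$ implies $f$ is constant along each orbit up to an additive constant, which must vanish since $f$ vanishes on the fixed locus of the associated chart.

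Next comes the main geometric input. The fixed locus $U^\Gm\subset U$ is a smooth closed subscheme (being the fixed scheme of a torus action on a smooth variety in characteristic zero or via Luna-type slices in general). Choose local coordinates $(x_1,\dots,x_k,y_1,\dots,y_m)$ at a fixed point where $\Gm$ acts with weight $0$ on the $x_i$ and nonzero weights $w_j$ on the $y_j$, so that $U^\Gm=\{y=0\}$ locally. Since $f$ is $\Gm$-invariant, every monomial appearing in $f$ has total $y$-weight zero, so each such monomial is either $y$-free or contains at least two $y$-factors. Hence $\partial f/\partial y_j\in(y_1,\dots,y_m)$, which gives scheme-theoretically
\[
\Crit(f)\cap U^\Gm \;=\; \Crit(f|_{U^\Gm})
\]
as subschemes of $U^\Gm$. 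Thus $(R^\Gm,U^\Gm,f|_{U^\Gm},i|_{R^\Gm})$ is a critical chart, and it is straightforward to check from the definition of $\sS$ (exact sequence after (\ref{diagram_1})) that the pullback $i^\star$ sends the class of $f$ modulo $I_{R,U}^2$ to the class of $f|_{U^\Gm}$ modulo $I_{R^\Gm,U^\Gm}^2$, so $i^\star(s)$ restricts locally to the canonical section induced by the chart $(R^\Gm,U^\Gm,f|_{U^\Gm},i|_{R^\Gm})$.

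Finally I would verify compatibility under change of chart: given two $\Gm$-invariant critical charts and an equivariant embedding $\Phi$ between their ambient smooth schemes as in diagram (\ref{diagram_1}), restricting the whole diagram to the fixed loci yields the analogous transition datum for $X^\Gm$, so the locally defined sections of $\sS_{X^\Gm}$ agree on overlaps and glue to a global section $s^\Gm\in H^0(\sS_{X^\Gm})$ with $s^\Gm=i^\star s$. The main obstacle is the scheme-theoretic identification $\Crit(f)\cap U^\Gm=\Crit(f|_{U^\Gm})$: the set-theoretic version is immediate, but the equality of ideals requires the $\Gm$-weight decomposition argument above and the ability to choose $\Gm$-linearised local coordinates at fixed points, which is the reason one must first secure $\Gm$-invariant critical charts via Proposition 3.3(1).
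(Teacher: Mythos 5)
The paper states this proposition without proof, attributing it to \cite{Joyce} and \cite{Jiang3}, so there is no in-text argument to compare against; I can only assess your proposal on its own terms, and it is essentially correct. The key lemma is the scheme-theoretic identification $\Crit(f)\cap U^\Gm = \Crit(f|_{U^\Gm})$ inside a $\Gm$-invariant critical chart $(R,U,f,i)$: linearize the action at a fixed point into weight-zero coordinates $x_i$ and nonzero-weight coordinates $y_j$; $\Gm$-invariance of $f$ rules out any monomial of $y$-degree exactly one (its weight would be some $w_j\neq 0$), so every monomial is $y$-free or of $y$-degree at least two, hence $\partial f/\partial y_j\in(y_1,\dots,y_m)$ and the asserted equality of ideals follows. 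This makes $(R^\Gm,U^\Gm,f|_{U^\Gm},i|_{R^\Gm})$ a genuine critical chart for $X^\Gm$, and restricting transition diagrams of the shape (\ref{diagram_1}) to fixed loci handles the gluing, as you indicate.

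Two points to tidy up. First, the goodness hypothesis is not stated in the proposition but is needed to invoke the preceding proposition supplying $\Gm$-invariant critical charts; your parenthetical about passing to an invariant affine open is slightly circular, since the existence of an invariant affine cover is essentially the definition of goodness. In the paper's intended application (the cone $N$) the action is good, so this is harmless, but the hypothesis should be made explicit rather than smuggled in. Second, the statement reads ``formal $d$-critical scheme structure,'' borrowing terminology from \cite{Jiang3}; your argument constructs the ordinary scheme-theoretic $d$-critical structure, which is almost certainly what is meant here, but the wording mismatch deserves a remark. Finally, the hedge about Luna slices is unnecessary: $\Gm$ is linearly reductive in every characteristic, so fixed loci of $\Gm$-actions on smooth schemes are smooth and admit $\Gm$-linearizable local coordinates regardless.
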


Let $(N,s)$ be a $d$-critical $\kappa$-scheme with a good $\Gm$-action.  Let 
$$N^{\Gm}=\bigsqcup_{i\in J} N_i^{\Gm}$$
be the decomposition of the fixed locus $X^{\Gm}$ into connected components, such that 
$(N_i^{\Gm}, s_i^{\Gm})$ are oriented  $d$-critical schemes.   The action $\Gm$ has a decomposition 
$$T_x(N_i)=(T_x N_i)_{0}\oplus T_x(N_i)_{+}\oplus (T_x(N_i))_-$$
where the direct sums are the parts of zero, positive and negative weights with respect to the $\Gm$-action.  
Maulik \cite{Maulik} defined the virtual index
\begin{equation}\label{virtual_index}
\ind^{\virt}(N_i^{\Gm},N)=\dim_{\kappa}(T_{x}(N)_+)-\dim_{\kappa}(T_{x}(N)_-)
\end{equation}
so that it is constant on the strata $N_i^{\Gm}$.  This virtual index is essential in \cite{Maulik} to prove the motivic localization formula, see also \cite[Theorem 7.17]{Jiang3}.  We don't need it here. 

Suppose that the $d$-critical $\kappa$-scheme $(N,s)$ is oriented, i.e., there exists a square root 
$K_{N,s}^{\frac{1}{2}}$ for the canonical line bundle $K_{N,s}$.  Then we denote by $\mMF_{N,s}^{\phi}\in \overline{\mM}^{\hat{\mu}}_{N}$ the global motive for $N$ in \cite{BJM}. Here 
$\overline{\mM}^{\hat{\mu}}_{N}=K_0^{\hat{\mu}}(\Var_{N})[\ll^{-1}]$ is the equivariant Grothendieck ring of varieties over $X$ with a new product $\odot$ defined in \cite{BJM}, and reviewed in \cite[\S 7.3]{Jiang3}. 

\begin{defn}
We call the action 
$$\mu: \Gm\times N\to  N$$
{\em circle-compact} if the limit $\lim_{\lambda\to 0}\mu(\lambda)x$ exists for any $x\in N$. If $X$ is proper, then any $\Gm$-action on $N$ is circle-compact. 
\end{defn}

For the $d$-critical scheme $(N,s)$ in  Proposition \ref{prop_d_critical_scheme_N}, 
we have:
\begin{prop}
There exists a good and circle compact $\Gm$-action on the oriented $d$-critical scheme 
$(N,s)$ for the cone $\pi: N\to M$. 
\end{prop}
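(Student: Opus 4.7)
The plan is to take the canonical fiberwise scaling action of $\Gm$ on $N$ from \S 2.1, namely
\[
\mu: \Gm \times N \To N, \qquad (\lambda, n) \mapsto \lambda \cdot n,
\]
induced by the grading on $\Sym^\bullet \ob_M$ (with $\ob_M$ placed in degree one), and to verify directly that it satisfies both required properties. Its fixed locus is the zero section $M \subset N$.

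To check goodness, I would use that $\pi: N \to M$ is an affine morphism, since by construction $N = \Spec \Sym^\bullet \ob_M$ is relatively affine over $M$. Because $M$ is projective, I would pick a finite affine open cover $\{R_\alpha\}$ of $M$. Then each preimage $\widetilde{R}_\alpha := \pi^{-1}(R_\alpha)$ is affine, and it is $\Gm$-stable because the $\Gm$-action preserves the fibers of $\pi$. This produces a $\Gm$-invariant affine open cover of $N$, which is exactly the defining property of a good action in the sense of the definition just above.

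To check circle-compactness, I would fix an arbitrary $x \in N$, set $m := \pi(x) \in M$, and argue fiberwise. The orbit $\Gm \cdot x$ is contained in the scheme-theoretic fiber
\[
\pi^{-1}(m) = \Spec \Sym^\bullet\!\bigl(\ob_M|_m\bigr) \cong \aaa^{\dim_\kappa \ob_M|_m}_\kappa,
\]
on which $\Gm$ acts by the standard scaling (since $\ob_M|_m$ sits in degree one of the symmetric algebra). The orbit morphism $\Gm \to N$, $\lambda \mapsto \lambda \cdot x$, therefore extends across $\lambda = 0$ to send $0$ to the origin of this affine space, which is precisely $m$ viewed as a point of the zero section $M \subset N$. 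Hence $\lim_{\lambda \to 0} \mu(\lambda) x = m$ exists, and the action is circle-compact.

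The same input drives both steps: the relatively affine structure of $\pi: N \to M$ together with the fact that each fiber is an affine space on which the action is standard scaling. I do not anticipate any real obstacle; the proposition is essentially a formal consequence of the abelian cone construction $N = C(\ob_M)$, and neither the local critical-chart description nor the orientation $K_N^{1/2}$ from Proposition \ref{prop_d_critical_scheme_N} is needed at this stage — those enter only in the localization formula of Theorem \ref{thm_general_motivic_localization}.
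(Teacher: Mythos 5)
The paper states this proposition without proof, treating it as immediate from the abelian-cone construction of $N$. Your argument is the natural one filling in those details, and it is correct: $N = \Spec\Sym^\bullet\ob_M$ is relatively affine over $M$, so the preimages $\pi^{-1}(R_\alpha)$ of affine opens $R_\alpha \subset M$ are affine and $\Gm$-stable (the action scales fibres, hence commutes with $\pi$), giving the $\Gm$-invariant affine open cover that the paper's definition of ``good'' requires; and circle-compactness follows because each orbit is confined to a single fibre $\pi^{-1}(m) \cong (\ob_M|_m)^\vee$, a finite-dimensional vector space on which $\Gm$ acts by weight-one scaling, so the orbit morphism $\Gm \to N$ extends over $0 \in \aaa^1$ to land on the zero section. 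Two minor remarks: you do not actually need $M$ projective (or the cover finite) for goodness — any affine open cover of $M$ would do — and it is worth noting that the proposition deliberately asks only for ``good'' and ``circle-compact,'' not for $\Gm$-invariance of the $d$-critical section $s$ or preservation of the orientation $K_N^{1/2}$; as the paper remarks elsewhere, the scaling action on $N$ does \emph{not} preserve the orientation, which is precisely why Theorem \ref{thm_general_motivic_localization} requires a different argument than \cite[Theorem 7.17]{Jiang3}.
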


\subsection{The $\Gm$-localization formula for the cone $N$}

\begin{thm}\label{thm_general_motivic_localization}
Let $(N,s)$ be the $d$-critical scheme in Proposition \ref{prop_d_critical_scheme_N} and $\mu$ is the good, circle-compact action of  $\Gm$ on $N$. Assume that there is an orientation $K_{N,s}^{\frac{1}{2}}$ and let  
$\mMF_{N}^{\phi}\in\overline{\mM}_{N}^{\hat{\mu}}$ be the global motive of $N$.   Then we have the following localization formula. 
$$\int_{N}\mMF_{N}^{\phi}=\ll^{-(\rank(E^0)-\rank(E^{-1}))/2}\odot \int_{M}[M].$$ 
\end{thm}
\begin{rmk}
We remark here that $\int_{N}\mMF_{N}^{\phi}$ means pushforward to a point, hence the absolute motive in 
$\overline{\mM}_{\kappa}^{\hat{\mu}}$. 

In practice, our $N$ will be the moduli space of stable Higgs sheaves $(E,\phi)$ on a surface or surface DM stack $S$ as in \cite{TT}, \cite{JP}.  A recent result in \cite{JU} shows that there is a natural orientation data on the moduli stack of coherent sheaves on Calabi-Yau threefold, and this orientation data, when restricting to the moduli space of torsion 2-dimensional sheaves on $X:=\Tot(K_S)$ (which is isomorphic to $N$) gives the orientation  $K_{N,s}^{\frac{1}{2}}$.
\end{rmk}
\begin{proof}
Let $(\widetilde{R},  \widetilde{A}, \s, \widetilde{i})$ be a $d$-critical chart of $(N,s)$.  Recall that from \cite{BJM}, \cite{Jiang3}, the motive 
$$\mMF_{N}^{\phi}|_{\widetilde{R}}=i^\star(\mMF_{\widetilde{A},\s}^{\phi})\odot \Upsilon(Q_{\widetilde{R},  \widetilde{A}, \s, \widetilde{i}})\in \overline{\mM}_{\widetilde{R}}^{\hat{\mu}},$$
where 
$\mMF_{\widetilde{A},\s}^{\phi}=\ll^{-\dim(\widetilde{A})/2}\odot [[\widetilde{A}_0,\hat{\iota}]-\mMF_{\widetilde{A},\s}]|_{\widetilde{R}}$ is the motivic vanishing cycle; 
$\mMF_{\widetilde{A},\s}$ is the motivic nearby cycle defined in \cite[Definition 7.9]{Jiang3}; and 
$\Upsilon(Q_{\widetilde{R},  \widetilde{A}, \s, \widetilde{i}})=\ll^{-\frac{1}{2}}\odot ([\widetilde{R},\hat{\iota}]-[Q,\hat{\rho}])\in \overline{\mM}_{\widetilde{R}}^{\hat{\mu}}$ is the motive of the principal $\zz_2$-bundle 
$Q_{\widetilde{R},  \widetilde{A}, \s, \widetilde{i}}$ as in \cite[\S 2.5]{BJM} and recalled in \cite[\S 7.3]{Jiang3}. 
The canonical line bundle $K_{N,s}$ is the unique line bundle on $N$ such that 
on the $d$-critical chart $(\widetilde{R},  \widetilde{A}, \s, \widetilde{i})$, there is a natural isomorphism 
$$\iota: K_{N,s}|_{\widetilde{R}^{\red}}\stackrel{\cong}{\longrightarrow} 
\widetilde{i}^{\star}(K_{\widetilde{A}}^{\otimes 2})|_{\widetilde{R}^{\red}}.$$
The principal $\zz_2$-bundle 
$Q_{\widetilde{R},  \widetilde{A}, \s, \widetilde{i}}$ parametrizes the  local isomorphisms
\begin{equation}\label{eqn_local_principal_Z_2-bundle}
K_{N,s}^{1/2}|_{\widetilde{R}^{\red}}\stackrel{\cong}{\longrightarrow} 
\widetilde{i}^{\star}(K_{\widetilde{A}}^{})|_{\widetilde{R}^{\red}}
\end{equation}
given by the orientation. 
The product $\odot$ in  $\overline{\mM}_{\widetilde{R}}^{\hat{\mu}}$ is given in \cite[Definition 2.3]{BJM} and we reviewed it in 
Definition \ref{defn_odot_product}. 

We now prove the result on the $d$-critical chart  $(\widetilde{R},  \widetilde{A}, \s, \widetilde{i})$.  We prove that the following formula holds:
\begin{equation}\label{local_picture_formula}
\int_{\widetilde{R}}\mMF_{\widetilde{A},\s}^{\phi}=\ll^{-\frac{1}{2}\dim(\widetilde{A})+\rank(E)}\odot \int_{R}[R].
\end{equation}

 Recall  from Proposition \ref{prop_d_critical_scheme_N} that in local coordinates each $d$-critical chart of $N$ is given by 
$$(R, \Tot(E^*)|_{U}, \widetilde s, i),$$
where  Tot$\;(E^*)|_{U}$  is the bundle $E^*$ over an open neighborhood $U$ of $A$, $\s$ is the local regular function, and $R:=\Crit(\s)$, $i: R\hookrightarrow \Tot(E^*)|_{U}$ is the inclusion. 
Trivialising $E$ with a basis of sections $e_j$, we get a dual basis $f_j$ for $E^*$ and coordinates $y_j$ on the fibres of Tot$\;(E^*)$.  The function 
$$\widetilde s=\sum_j s_jy_j,$$
where we  write the section  $s=\sum_js_je_j,\ \tau=\sum_jy_jf_j$.  The group  $\Gm$ acts  on $N$ by scaling the fibre and the fixed locus is $M\subset N$.  So the invariant part $\s^{\Gm}=0$.

Since locally $\widetilde{A}=A\times \aaa_{\kappa}^{\rank(E^*)}$, and the $\Gm$-action on $N$ actually extends to an action on 
$\widetilde{A}$.  
Here we assume that the 
group 
$\Gm$ acts on $A$ trivially, and on $\aaa_{\kappa}^{\rank(E^*)}$ by positive weights $w_1>0, \cdots, w_r>0$.  
Let $\Gm$ act on $\aaa_{\kappa}^1$ by weight $d>0$. 
Our cone $N\to M$ case is exactly the special case that all $w_i, d$ are equal to one. 
So this is exactly the situation as in Theorem 4.1.1. in \cite{NS}.  So from \cite[Theorem 4.1.1.]{NS} we have the motivic nearby cycle 
$$\mMF_{\widetilde{A},\s}=[\s^{-1}(1)].$$

From the proof of  Proposition \ref{prop_d_critical_scheme_N}, the critical locus $\widetilde{R}\subset \widetilde{A}$ has ideal 
$(s_i, Ds_i)$.  So over $\widetilde{R}\setminus R$, i.e., the part of $\widetilde{R}$ such that 
$s_i\neq 0$, $\s^{-1}(0)$ and $\s^{-1}(1)$ are all afine subspaces in $\widetilde{A}$. Hence 
 $[\s^{-0}(0)-\s^{-1}(1)]|_{\widetilde{R}\setminus R}=0$.  
We calculate
\begin{align*}
\mMF_{\widetilde{A},\s}^{\phi}|_{\widetilde{R}}&=\ll^{-\dim(\widetilde{A})/2}\odot [[\widetilde{A}_0,\hat{\iota}]-\mMF_{\widetilde{A},\s}]|_{\widetilde{R}}\\
&=\ll^{-\dim(\widetilde{A})/2}\odot[\s^{-0}(0)-\s^{-1}(1)]|_{\widetilde{R}} \\
&=\ll^{-\dim(\widetilde{A})/2}\odot\left([\s^{-0}(0)-\s^{-1}(1)]|_{R}+ [\s^{-0}(0)-\s^{-1}(1)]|_{\widetilde{R}\setminus R}\right) \\
&=\ll^{-\dim(\widetilde{A})/2}\odot [\s^{-1}(0)]|_{R}\\ 
&=\ll^{-\dim(\widetilde{A})/2}\odot \ll^{\rank(E^*)}\cdot [R].
\end{align*}

Let us argue the invariant 
$$\Upsilon(Q_{\widetilde{R},  \widetilde{A}, \s, \widetilde{i}})=\ll^{-\frac{1}{2}}\odot ([\widetilde{R},\hat{\iota}]-[Q,\hat{\rho}])\in \overline{\mM}_{\widetilde{R}}^{\hat{\mu}}$$
on the   $d$-critical chart $(\widetilde{R},  \widetilde{A}, \s, \widetilde{i})$. 
Shrink $\widetilde{R}$ if necessary, the orientation
$K_{N,s}^{1/2}$ is trivial on $\widetilde{R}$.  Then the principal $\zz_2$-bundle 
$Q_{\widetilde{R},  \widetilde{A}, \s, \widetilde{i}}$ is trivial  parametrizing the isomorphisms in (\ref{eqn_local_principal_Z_2-bundle}), so 
\begin{align*}
\Upsilon(Q_{\widetilde{R},  \widetilde{A}, \s, \widetilde{i}})&=\ll^{-\frac{1}{2}}\odot ([\widetilde{R},\hat{\iota}]-[Q,\hat{\rho}])\in \overline{\mM}_{\widetilde{R}}^{\hat{\mu}}\\
&=\ll^{-\frac{1}{2}}\odot \ll^{\frac{1}{2}}\odot [\widetilde{R},\hat{\iota}]\\
&=[\widetilde{R},\hat{\iota}].
\end{align*}
Since the motive $[\widetilde{R},\hat{\iota}]$ is the identity in $\overline{\mM}_{\widetilde{R}}^{\hat{\mu}}$, we have 
$$\mMF_{N}^{\phi}|_{\widetilde{R}}=\ll^{-\dim(\widetilde{A})/2}\cdot \ll^{\rank(E^*)}\cdot [R].$$
This implies that the global motive $\mMF_{N,s}^{\phi}$ is locally on the $d$-critical charts given by the absolute motive of the base $[R]$ in $M$,
so the motives $[R]$ glue to give $[M]$. 
Finally 
$$-\dim(\widetilde{A})/2+\rank(E^*)=-\frac{1}{2}\dim(A)+\frac{1}{2}\rank(E^*)=-\frac{1}{2}\vd.$$
The result follows. 
\end{proof}

\begin{rmk}
Theorem  \ref{thm_general_motivic_localization} implies one of the main result 
$$\chi(N, \nu_{N})=(-1)^{\vd}\chi(M)$$
in \cite{JT}, which has applications in the study of Vafa-Witten invariants \cite{TT}, \cite{TT2}. 

One easily see this by taking the limit $\lim_{\ll^{\frac{1}{2}}\to -1}$, and note that 
$$\lim_{\ll^{\frac{1}{2}}\to -1}\int_{N}\mMF_{N}^{\phi}=\chi(N, \nu_{N})$$
and 
$(-1)^{-\rank(E^0)-\rank(E^{-1})}=(-1)^{\dim \Tot(E^*)}=(-1)^{\vd}$, we have 
$$\chi(N, \nu_{N})=(-1)^{\vd}\chi(M).$$ 
\end{rmk}

\section{Motivic Vafa-Witten theory}\label{sec_motivic_VW}

In this section we review the motivic Vafa-Witten invariants and give calculations for K3 surfaces.

\subsection{Power structure on motivic ring}\label{subsec_power_structure}

We recall the power structure on the Grothendieck ring $K_0(\Var_{\kappa})$ following \cite{Gusein-Zade}. 
Let $R$ be a ring. A power structure on $R$ is a map
$$(1+q R[\![q]\!])\times R\to 1+q R[\![q]\!])$$
by
$$(A(t), m)\mapsto (A(q))^m$$
which satisfies the properties: $A(q)^0=1$, $A(q)^1=A(q)$, $A(q)^{m+n}=A(q)^{m} A(q)^n$, $A(q)^{mn}=(A(q)^{m})^{n}$,
$A(q)^m B(q)^m=(A(q) B(q))^m$, and $(1+q)^m=1+mq+O(q^2)$. 
From  \cite{Gusein-Zade}, there exists a power structure on the Grothendieck ring $K_0(\Var_{\cc})$ , defining uniquely by the property 
that for a variety $X$, 
$$(1-q)^{-[X]}=\sum_{k=0}^{\infty}[\Sym^k(X)]q^k$$
is the motivic zeta function. 

Also from  \cite{Gusein-Zade},  we have the exponential  map
\begin{equation}\label{eqn_exponential_map}
\Exp:  q \overline{\mM}_{\cc}[\![q]\!]\to 1+q \overline{\mM}_{\cc}[\![q]\!]
\end{equation}
by
$$\Exp\left(\sum_{n=1}^{\infty}[A_n]q^n\right)=\prod_{n\geq 1}(1-q^n)^{-[A_n]}.$$
This equation satisfies the substitution rule:
\begin{equation}\label{eqn_substitution_rule}
\Exp (A(q))|_{q\mapsto (-\ll^{\frac{1}{2}})^n q}  =\Exp\left(A\left( (-\ll^{\frac{1}{2}})^n q\right)\right).
\end{equation}

\subsection{Motivic Vafa-Witten invariants}\label{subsec_motivic_VW}

In this section we  define the motivic Vafa-Witten invariants. Let us fix a smooth surface $S$ and $X:=\Tot(K_S)$ be the total space of canonical line bundle $K_S$.  In \cite{TT}, Tanaka-Thomas defined the moduli space of stable Higgs pairs $(E,\phi)$, where $E$ is a torsion free rank $r$ coherent sheaf on $S$, and $\phi: E\to E\otimes K_S$ is a $\oO_S$-linear map called the Higgs field.  The detail definition of the Gieseker stability and the construction of the moduli spaces are in \cite{TT}.  We work on the moduli space 
$$N_L^{\perp}:=N_L^{\perp}(S, c)$$
of stable Higgs pairs on $S$ with topological invariant $c=(r, c_1, c_2)$ and 
$\tr(\phi)=0$, $\det(E)=L\in \Pic(S)$.  From spectral theory in \cite[\S 2]{TT}, every Higgs pair $(E,\phi)$ corresponds to a two dimensional torsion sheaf $\eE_{\phi}$ supported on $S\subset X$. The Gieseker stability of the Higgs pair $(E,\phi)$ transforms to the Gieseker stability of the  torsion sheaf $\eE_{\phi}$, and the moduli space $N_L^{\perp}$ is isomorphic to the moduli space of stable two dimensional torsion sheaves on $X$ with fixed determinant.  Thus the moduli space $N_L^{\perp}$ admits a symmetric obstruction theory as in \cite{Behrend}.  $N_L^{\perp}$ is not compact, but admits a $\cc^*$-action scaling the Higgs field (or induced by the scaling fiber action on $X$), the $\cc^*$-fixed locus contains two branches.

\textbf{Instanton Branch}:  The first component contains  Higgs pairs $(E,\phi)$ which is $\cc^*$-fixed and $\phi=0$.  This component is just the moduli space $M:=M_L(S,c)$ of stable coherent sheaves on $S$ with topological invariant $c$. Note that in this case $E$ is stable. 

\textbf{Monopole Branch}:  The second component contains  Higgs pairs $(E,\phi)$ which is $\cc^*$-fixed and $\phi\neq 0$.
This component usually contains pair $(E,\phi)$, where $E$ splits to direct sum of subsheaves which maybe nonsemistable. 
For general type surfaces $S$ with $p_g>0$, some component  in this loci is isomorphic to nested Hilbert schemes on $S$, see \cite{TT}, \cite{Thomas}. 

The $\SU(r)$-Vafa-Witten invariants are defined by virtual localization \cite{GP} by:
\begin{equation}\label{eqn_VW_TT}
\VW_c(S):=\int_{[(N_L^{\perp})^{\cc^*}]^{\virt}}\frac{1}{e(N^{\virt})}
\end{equation}
where $e(N^{\virt})$ is the Euler class of the virtual normal bundle.  The small $\SU(r)$-Vafa-Witten invariants are defined by:
\begin{equation}\label{eqn_vw_TT}
\vw_c(S):=\chi(N_L^{\perp}, \nu_{N})
\end{equation}
which is the weighted Euler characteristic of $N_L^{\perp}$ weighted by the Behrend function $\nu_{N}: N_L^{\perp}\to \zz$. 

In general, $\VW_c(S)\neq \vw_c(S)$, but they are equal in two cases: the case $K_S<0$, and the case of K3 surfaces \cite{MThomas}.

Since $N_L^{\perp}$ is the moduli space of stable sheaves on $X$, from \cite{JU}, this moduli space admits an orientation 
$K_{N_L^{\perp}}^{\frac{1}{2}}$.  Therefore from \cite{BJM}, there exists a global motive 
$\mMF^{\phi}_{N_L^{\perp}}\in \overline{\mM}_{N_L^{\perp}}^{\hat{\mu}}$. 

\begin{defn}\label{defn_motivic_VW}
The motivic Vafa-Witten invariant $\vw_c^{m}(S)$ is defined by:
$$\vw_c^{M}(S):=\int_{N_L^{\perp}}[\mMF^{\phi}_{N_L^{\perp}}],$$
\end{defn}
where the integral means pushing forward to a point. 

\begin{rmk}
The motivic Vafa-Witten invariants can only be defined as a refinement for the invariants $\vw(S)$.  In \cite{Thomas}, R. Thomas defined the refinement K-theoretical Vafa-Witten invariants using virtual structure sheaf and orientation $K_{N_L^{\perp}}^{\frac{1}{2}}$, which refined the Vafa-Witten invariants $\VW(S)$. 
\end{rmk}

In order to apply Theorem \ref{thm_general_motivic_localization}, let 
$$N:=\left(N_L^{\perp}\right)^{\text{stable}}\subset N_L^{\perp}$$
be the locus in $N_L^{\perp}$ consisting of Higgs pairs $(E,\phi)$ such that $E$ is stable. Then $N\to M$ is a cone over the moduli space of stable sheaves on $S$, and is the situation in \S \ref{constr}.  $N$ is a quasi-projective scheme and also admits an induced symmetric obstruction theory from $N_L^{\perp}$. The restriction of the orientation $K_{N_L^{\perp}}^{\frac{1}{2}}$ gives an orientation 
$$K_{N}^{\frac{1}{2}}:=K_{N_L^{\perp}}^{\frac{1}{2}}|_{N}.$$
The global motive $\mMF^{\phi}_{N_L^{\perp}}$ restricts 
to give the global motive 
$$\mMF^{\phi}_{N}=\mMF^{\phi}_{N_L^{\perp}}|_{N}.$$
The $\cc^*$-fixed locus of $N$ is $M=M_L(S,c)$.  We review the splitting of the perfect obstruction theory of $N$ on $M$ a bit here following \cite[\S 3]{TT}.
Let 
$$p_X: N\times X\to X; \quad  p_S: N\times S\to S $$
be the projections and $\rE$ on $N\times X$ the universal sheaf  (or Higgs pair). 
Let $\pi: N\times X\to N\times S$ be the projection. Then the pushforward $\pi_*\rE=\E$ is the universal sheaf on $N\times S$. 
Then 
from the diagram in  
(\cite[Corollary 2.22]{TT}):
\[
\xymatrix{
R\cHom_{p_{S}}(\E, \E\otimes K_{S})_{0}[-1]\ar[r]\ar@{<->}[d] &R\cHom_{p_{X}}(\rE, \rE)_{\perp}
\ar[r]\ar@{<->}[d] & R\cHom_{p_{S}}(\E, \E)_{0}\ar@{<->}[d]\\
R\cHom_{p_{S}}(\E, \E\otimes K_{S})[-1]\ar[r]\ar@{<->}[d]_{\id}^{\tr} &R\cHom_{p_{X}}(\rE, \rE)
\ar[r]\ar@{<->}[d] & R\cHom_{p_{S}}(\E, \E)\ar@{<->}[d]_{\id}^{\tr}\\
Rp_{S *}K_{S}[-1]\ar@{<->}[r]& Rp_{S *}K_{S}[-1]\oplus Rp_{S *}\oO_{S}\ar@{<->}[r]& Rp_{S *}\oO_{S}
}
\]
where $(-)_0$ denotes the trace-free Homs.  The $R\cHom_{p_{X}}(\rE, \rE)_{\perp}$ is the co-cone of the middle column and it will provide the symmetric obstruction theory of the moduli space $N_{L}^{\perp}$ of stable trace free fixed determinant Higgs pairs. 
Thus we have:
$$E_N^{\bullet}:=R\cHom_{p_{X}}(\rE, \rE)_{\perp}[1]\mathfrak{t}^{-1}\cong 
R\cHom_{p_{S}}(\E, \E\otimes K_{S})_0[1]\oplus  R\cHom_{p_{S}}(\E, \E)_0[2]\mathfrak{t}^{-1}$$
where $\mathfrak{t}^{-1}$ represents the moving part of the $\cc^*$-action, which is the standard representation of $\cc^*$. 
Then 
$$E_M^{\bullet}:=R\cHom_{p_{S}}(\E, \E\otimes K_{S})_0[1]\to \ll_{M}$$
gives the perfect obstruction theory on $M$. 
Let $\vd:=\rank(E_M^{\bullet})$ be the virtual dimension. Then from Theorem \ref{thm_general_motivic_localization}, 
\begin{equation}\label{eqn_integral_M}
\int_{N}\mMF_N^{\phi}=\ll^{-\vd/2}\int_{M}[M].
\end{equation}

\begin{rmk}
Taking Euler characteristic of (\ref{eqn_integral_M}) gives:
$$\chi(N,\nu_N|_{N})=(-1)^{\vd}\chi(M)$$
which is the invariants $\vw(S)$ contributed from $N\subset N_L^{\perp}$. 
Formula  (\ref{eqn_integral_M}) is useful for calculations for the motivic Vafa-Witten invariants when $N_L^{\perp}$ has no 
monopole branch fixed loci. 
\end{rmk}

\subsection{Proof of Theorem \ref{thm_motivic_series_vw_K3}-Calculations for K3 surfaces}

Let $S$ be a smooth projective K3 surface.  In this section we perform a calculation for the motivic invariants for $S$. 
In this case $X=S\times \cc$, and any $\cc^*$-fixed Higgs pair $(E,\phi)$ has vanishing Higgs field $\phi=0$. Therefore the moduli space $N=N_L^{\perp}\to M$ is a cone over the moduli space of stable sheaves $M$.  We only have instanton branch. 

Fix a topological invariant $c_0=(r, c_1, c_2)\in H^*(S,\zz)$ such that $(r, c_1)=1$. Then semistablity coincides with stability and  the moduli space $M=M_L(S,c_0)$ is an irreducible symplectic variety which is birational equivalent to the Hilbert scheme 
$\Hilb^d(S)$ of point on $S$, see \cite{Yoshioka}.  Here 
$$d=1-\chi_S(c_0, c_0).$$
Sine the moduli space $M=M_L(S,c_0)$ is smooth, its virtual dimension is the dimension 
$$\vd=2-\chi_S(c_0,c_0).$$ 
Thus 
$$\int_{N}[\mMF_N^{\phi}]=\ll^{\frac{1}{2}\chi_S(c_0,c_0)-1}\int_M[M].$$
We calculated 
$$d=1+r(c_2-r)+\frac{1-r}{2}c^2_1.$$
Thus we write down the motivic generating series as:
\begin{equation}\label{eqn_motivic_series_vw_1}
\vw_{r, c_1}^M(S)=\sum_{c_2}[\mMF_N^{\phi}]q^{c_2}=\sum_{c_2}\ll^{\frac{1}{2}\chi_S(c_0,c_0)-1}[\Hilb^d(S)]q^{c_2},
\end{equation}

We calculate the motivic series (write $c_2$ as integers $m\in\zz$):
\begin{align*}
&\vw_{r, c_1}^M(S)\\
&=\sum_{m\in \zz}\ll^{-(1+r(m-r)+\frac{1-r}{2}c^2_1)}[\Hilb^{1+r(m-r)+\frac{1-r}{2}c^2_1}(S)]q^{m}\\
&=\sum_{m\in \zz}\ll^{-(1+r(m-r)+\frac{1-r}{2}c^2_1)}[\Hilb^{1+r(m-r)+\frac{1-r}{2}c^2_1}(S)]
\cdot \left(q^{\frac{1}{r}}\right)^{1+r(m-r)+\frac{1-r}{2}c^2_1}\left(q\right)^{r-\frac{1}{r}-\frac{1-r}{2r}c_1^2}\\
&=q^{r-\frac{1}{r}-\frac{1-r}{2r}c_1^2}\cdot \sum_{m\in \zz}[\Hilb^{1+r(m-r)+\frac{1-r}{2}c^2_1}(S)]\left(\ll^{-1}q^{\frac{1}{r}}\right)^{1+r(m-r)+\frac{1-r}{2}c^2_1}\\
\end{align*}
Now using the exponential form, see \cite{BBS}, \cite{Gottsche}, \cite{Gusein-Zade},
$$\sum_{m\in \zz}[\Hilb^m(S)]q^m=\Exp\left(\frac{[S]q}{1-\ll q}\right)$$
and therefore 
$$\sum_{m\in \zz}\ll^m[\Hilb^m(S)]q^m=\Exp\left(\frac{[S]\cdot \ll q}{1-\ll^2 q}\right).$$
Then we have
\begin{align*}
\vw_{r, c_1}^M(S)&=q^{r-\frac{1}{r}-\frac{1-r}{2r}c_1^2}\cdot \sum_{m\in \zz}[\Hilb^{1+r(m-r)+\frac{1-r}{2}c^2_1}(S)]\left(\ll^{-1}q^{\frac{1}{r}}\right)^{1+r(m-r)+\frac{1-r}{2}c^2_1}\\
&=q^{r-\frac{1}{r}-\frac{1-r}{2r}c_1^2}\cdot\frac{1}{r}\sum_{j=0}^{r-1}e^{\pi i \frac{r-1}{r}j c_1^2}\cdot \Exp\left(\frac{[S]\cdot \ll^{-1} e^{\frac{2\pi i j}{r}}q^{\frac{1}{r}}}{1-e^{\frac{2\pi i j}{r}}q^{\frac{1}{r}}}\right).
\end{align*}
We also can write it as the infinite product form. From (\ref{eqn_exponential_map}),  and since 
\begin{align*}
\frac{[S]\cdot \ll^{-1} e^{\frac{2\pi i j}{r}}q^{\frac{1}{r}}}{1-e^{\frac{2\pi i j}{r}}q^{\frac{1}{r}}}&=\sum_{n=0}^{\infty}[S]\cdot \ll^{-1} e^{\frac{2\pi i j}{r}}q^{\frac{1}{r}}\cdot (e^{\frac{2\pi i j}{r}}q^{\frac{1}{r}})^{n+1}\cdot (e^{\frac{2\pi i j}{r}}q^{\frac{1}{r}})^{-1}\\
&=\sum_{n=1}^{\infty}\ll^{-1}[S]\cdot (e^{\frac{2\pi i j}{r}}q^{\frac{1}{r}})^{n}.
\end{align*}
Therefore 
$$\Exp\left(\frac{[S]\cdot \ll^{-1} e^{\frac{2\pi i j}{r}}q^{\frac{1}{r}}}{1-e^{\frac{2\pi i j}{r}}q^{\frac{1}{r}}}\right)=
\prod_{n\geq 1}\left(1-\left(e^{\frac{2\pi i j}{r}}q^{\frac{1}{r}}\right)^n\right)^{-\ll^{-1}[S]}.
$$
Then 
$$\vw_{r, c_1}^M(S)=q^{r-\frac{1}{r}-\frac{1-r}{2r}c_1^2}\cdot\frac{1}{r}\sum_{j=0}^{r-1}e^{\pi i \frac{r-1}{r}j c_1^2}\cdot\prod_{n\geq 1}\left(1-\left(e^{\frac{2\pi i j}{r}}q^{\frac{1}{r}}\right)^n\right)^{-\ll^{-1}[S]}.$$
$\square$

\subsection{Proof of the $\chi_y$-genus in Corollary \ref{cor_chi_y_series_vw_K3}}
From \cite{Gusein-Zade},  there exists a ring homomorphism 
$$e: K_0(\Var_{\cc})\to \zz[u,v]$$
given by sending each variety $X$ to its Hodge-Deligne polynomial
$$X\mapsto e_X(u,v)=\sum_{i,j}h^{ij}(X)(-u)^i (-v)^j.$$
The $\chi_y$-genus is just $e_X(y, 1)$.  
The $\chi_y$-polynomial of the K3 surface $S$ is $\chi_y(S)=2+20 y +2 y^2$.
For the Hilbert scheme $\Hilb^m(S)$, from the properties the ring homomorphism
in \cite[Theorem 2]{Gusein-Zade} 
we have 
\begin{align*}
&\chi_y\left(\Exp\left(\frac{[S]\cdot \ll^{-1} e^{\frac{2\pi i j}{r}}q^{\frac{1}{r}}}{1-e^{\frac{2\pi i j}{r}}q^{\frac{1}{r}}}\right)\right)\\
&=\chi_y\left(\prod_{n\geq 1}\left(1-\left(e^{\frac{2\pi i j}{r}}q^{\frac{1}{r}}\right)^n\right)^{-\ll^{-1}[S]}\right)\\
&=\prod_{n\geq 1}\left(1-\left(e^{\frac{2\pi i j}{r}}q^{\frac{1}{r}}\right)^n\right)^{-y^{-1}(2+20 y+2 y^2)}\\
&=\prod_{n\geq 1}\left(1-\left(e^{\frac{2\pi i j}{r}}q^{\frac{1}{r}}\right)^n\right)^{-20}\left(1-y\left(e^{\frac{2\pi i j}{r}}q^{\frac{1}{r}}\right)^n\right)^{-2}\left(1-y^{-1}\left(e^{\frac{2\pi i j}{r}}q^{\frac{1}{r}}\right)^n\right)^{-2}.
\end{align*}
We define 
$$\widetilde{\Delta}(q, y):=\prod_{n\geq 1}\left(1-q^n\right)^{20}\left(1-y q^n\right)^{2}\left(1-y^{-1}q^n\right)^{2}$$
Then we get the $\chi_y$-genus of the Vafa-Witten invariants:
$$\vw_{r, c_1}^{\chi_y}(S)=q^{r-\frac{1}{r}-\frac{1-r}{2r}c_1^2}\cdot \frac{1}{r}\sum_{j=0}^{r-1}e^{\pi i \frac{r-1}{r}j c_1^2}\cdot\widetilde{\Delta}(e^{\frac{2\pi i j}{r}}q^{\frac{1}{r}}, y)^{-1}.$$

Note that this result is similar to \cite[Theorem 5.48]{Thomas}, where Thomas got similar results  using K-theoretical Vafa-Witten invariants.

\bibliographystyle{halphanum}
\bibliography{references}

\bigskip \noindent \\
{\tt y.jiang@ku.edu} \medskip

\noindent Department of Mathematics \\
\noindent University of Kansas \\
\noindent 405 Snow Hall, 1460 Jayhawk Blvd \\
\noindent Lawrence, KS 66045. USA

\end{document}